\def\bC{\mathbb{C}}
\def\bP{\mathbb{P}}
\DeclareMathOperator{\Fl}{Fl}
\DeclareMathOperator{\Gr}{Gr}
\DeclareMathOperator{\pr}{pr}
\DeclareMathOperator{\ssyt}{SSYT}
\newtheorem{thm}{Theorem}
\newtheorem{lem}[thm]{Lemma}
\newtheorem{cor}[thm]{Corollary}
\newtheorem{prop}[thm]{Proposition}
\newtheorem{rem}[thm]{Remark}
\newtheorem{defn}[thm]{Definition}
\g@addto@macro\bfseries{\boldmath} % This makes math in section titles bold.
\begin{document}

\title{Torus orbit closures and 1-strip-less tableaux}

\author{Carl Lian}

\address{Tufts University, Department of Mathematics, 177 College Ave
\hfill \newline\texttt{}
 \indent Medford, MA 02155} \email{{\tt Carl.Lian@tufts.edu}}

\begin{abstract}
We compare two formulas for the class of a generic torus orbit closure in a Grassmannian, due to Klyachko and Berget-Fink. The naturally emerging combinatorial objects are semi-standard fillings we call \emph{1-strip-less tableaux}.
\end{abstract}

\maketitle

\section{Introduction}

Let $\Gr(r,n)$ be the Grassmannian of $r$-planes in $\bC^n$, on which an $n$-dimensional torus $T$ acts in the standard way. Let $x\in \Gr(r,n)$ be a general point, let $X:=\overline{T\cdot x}$ be the closure of the orbit of $x$, and let $[X]\in H^{2(r-1)(n-r-1)}(\Gr(r,n))$ be the corresponding cohomology class. Such classes have received considerable attention in the literature \cite{k,speyer,bf,lpst,nt}; the author's own interest in them stems from their relationship to fixed-domain curve counts in the projective space $\bP^{r-1}$ \cite{l_coll}.

We have the following two formulas for $[X]$ originally due to Klyachko and Berget-Fink, respectively, obtained via different methods.

\begin{thm}\cite[Theorem 6]{k}\label{k_formula}
Write
\begin{equation*}
[X]=\sum_{\mu} \Gamma^\mu_{r,n}\sigma_{\mu}
\end{equation*}
in terms of the basis of Schubert cycles $\sigma_\mu$. Then, we have
\begin{equation*}
\Gamma^\mu_{r,n}=\sum_{j=0}^{m(\mu)}(-1)^j\binom{n}{j}|\ssyt_{r-j}(\mu^j)|,
\end{equation*}
where:
\begin{itemize}
\item $m(\mu)$ denotes the number of parts of $\mu$ equal to $n-r$ (the largest possible), 
\item $\mu^j$ denotes the partition of length $r-j$ obtained from $\mu$ by removing $j\le m(\mu)$ parts of size $n-r$, and
\item $|\ssyt_{r'}(\mu')|$ denotes the number of semi-standard Young tableaux (SSYTs) of shape $\mu'$ filled with entries $1,2,\ldots,r'$.
\end{itemize}
\end{thm}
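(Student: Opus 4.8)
The plan is to read off the coefficient $\Gamma^\mu_{r,n}$ as an intersection number. Writing $\mu^\vee$ for the complement of $\mu$ inside the $r\times(n-r)$ rectangle, the classes $\{\sigma_{\mu^\vee}\}$ form the Poincar\'e dual basis to $\{\sigma_\mu\}$, so that
\begin{equation*}
\Gamma^\mu_{r,n}=\int_{\Gr(r,n)}[X]\cdot \sigma_{\mu^\vee}.
\end{equation*}
Since $\dim X=n-1$ and $|\mu^\vee|=r(n-r)-(r-1)(n-r-1)=n-1$, this pairing is a genuine integer. By Kleiman transversality I may represent $\sigma_{\mu^\vee}$ by a Schubert variety $\Omega_{\mu^\vee}(gF_\bullet)$ attached to a general flag, so that $\Gamma^\mu_{r,n}$ computes the number of points of $X\cap\Omega_{\mu^\vee}(gF_\bullet)$, counted with multiplicity.

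The first step is to identify $X$ geometrically. For general $x$ the underlying matroid is uniform, so $X$ is the projective toric variety whose moment polytope is the hypersimplex $\Delta_{r,n}=\{y\in[0,1]^n:\sum_i y_i=r\}$; its $T$-fixed points are exactly the coordinate points $p_S$ indexed by $r$-element subsets $S\subseteq\{1,\dots,n\}$ (the vertices of $\Delta_{r,n}$). I would then exploit the recursive structure of the hypersimplex. It has exactly $n$ facets of the form $\{y_i=1\}$, each a copy of $\Delta_{r-1,n-1}$, and the toric variety of such a facet is the generic orbit closure inside the sub-Grassmannian $\Gr(r-1,n-1)\cong\{\,W:e_i\in W\,\}$. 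The intersection of $j$ of these facets is a copy of $\Delta_{r-j,n-j}$, sitting in $\Gr(r-j,n-j)$, and there are $\binom{n}{j}$ of them; this is the geometric source of the binomial coefficients and of the operation $\mu\mapsto\mu^j$ that deletes $j$ maximal parts, each of size $n-r$, noting that the box shrinks from $r\times(n-r)$ to $(r-j)\times(n-r)$.

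The main computation is to evaluate the transverse count. Away from the toric boundary I expect the intersection points of $X$ with the generic Schubert variety to be governed by lattice points of a Gelfand--Tsetlin polytope, producing the leading contribution $|\ssyt_r(\mu)|=s_\mu(1^r)=\dim V_\mu^{\mathrm{GL}_r}$, which is the $j=0$ summand; concretely this should follow from a Gr\"obner degeneration of $\Omega_{\mu^\vee}$ to a standard-monomial (Stanley--Reisner) scheme whose combinatorics is precisely that of semistandard tableaux. The remaining intersection points accumulate along the boundary divisors coming from the facets $\{y_i=1\}$, and I would account for them by inclusion--exclusion over subsets of $\{1,\dots,n\}$: restricting to the intersection of $j$ such facets replaces $\Gr(r,n)$ by $\Gr(r-j,n-j)$ and $\mu$ by $\mu^j$, contributing $(-1)^j\binom{n}{j}|\ssyt_{r-j}(\mu^j)|$. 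Summing over $0\le j\le m(\mu)$, the upper limit $m(\mu)$ appearing because $\mu^j$ is defined only when $\mu$ has at least $j$ parts equal to $n-r$, yields the stated formula.

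I expect the crux to be the boundary analysis in the last step: proving that the excess contributions along the toric boundary are exactly the smaller hypersimplex counts, with the correct signs and without stray multiplicities. Making this precise requires controlling how $\Omega_{\mu^\vee}(gF_\bullet)$ meets the boundary strata of $X$ and then carrying out the M\"obius inversion cleanly. As an alternative route that trades this geometric obstacle for a combinatorial one, I would phrase the entire count equivariantly and verify the identity by localization at the fixed points $p_S$, reducing the problem to a (still delicate) manipulation of factorial Schur functions.
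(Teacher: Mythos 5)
You are attempting to prove a statement that the paper itself does not prove: Theorem \ref{k_formula} is quoted from Klyachko \cite{k} and used as an input, and the paper's actual content is the comparison of this formula with Berget--Fink's (Theorems \ref{main_formula} and \ref{stripless_interpretation}), carried out via Coskun's geometric Littlewood--Richardson rule. So your proposal must be judged on its own merits, and it has a genuine gap: the two halves of your argument are incompatible. You first invoke Kleiman transversality for a \emph{general} translate $g\Omega_{\mu^\vee}$, and then claim that part of the intersection ``accumulates along the boundary divisors'' of $X$, to be removed by inclusion--exclusion over facets of the hypersimplex. But the toric boundary $\partial X$ has dimension $n-2$, while $\Omega_{\mu^\vee}$ has codimension $n-1$; so for general $g$ the intersection $\partial X\cap g\Omega_{\mu^\vee}$ is \emph{empty}, and every one of the $\Gamma^\mu_{r,n}$ intersection points lies in the open orbit $T\cdot x$. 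There is nothing on the boundary to correct for: the ``interior'' count is not the $j=0$ term $|\ssyt_r(\mu)|$ but all of $\Gamma^\mu_{r,n}$, which is strictly smaller whenever $m(\mu)>0$. Relatedly, a stratified count of actual intersection points can only produce nonnegative contributions, so the alternating signs $(-1)^j$ cannot arise from the mechanism you describe; and the Gelfand--Tsetlin/Gr\"obner heuristic for the leading term is asserted, not proved (degenerating $\Omega_{\mu^\vee}$ to a union of coordinate subspaces does not pair against $[X]$ one point per standard monomial).

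Your instinct that the $n$ facets $\{y_i=1\}$ of $\Delta_{r,n}$ are the source of the binomials $\binom{n}{j}$ and of the operation $\mu\mapsto\mu^j$ is sound, but the correct mechanism is not enumerative geometry in general position; it is character/lattice-point counting, where the inclusion--exclusion runs over which coordinates violate the bound $y_i\le k$ in the dilated hypersimplex (equivalently, a $K$-theoretic comparison of $\chi(X,\cO(k))$ with Euler characteristics of Schubert varieties), or alternatively equivariant localization at the fixed points $p_S$ --- the ``alternative route'' you mention in your last sentence but do not develop. That route, suitably executed, is essentially Klyachko's; as written, your primary argument would not close up. If instead you want a proof in the spirit of the present paper, the logic is: take Theorem \ref{bf_formula} (Berget--Fink) as the geometric input, expand $\sum_\lambda \sigma_\lambda\sigma_{\widetilde{\lambda}}$ through Corollary \ref{explicit_M} and Proposition \ref{coeffs_fill0}, and apply Lemma \ref{count_0strip}; this recovers exactly the alternating sum of Theorem \ref{k_formula}, with the binomial identity $\sum_j\binom{n-r+j-1}{j}\binom{r-j}{\ell-j}=\binom{n}{\ell}$ doing the final bookkeeping --- a purely combinatorial path that never needs to control how a Schubert variety meets $\partial X$.
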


See \S\ref{conventions} for a detailed discussion of notational conventions for Schubert calculus and related combinatorics.

\begin{thm}\cite[Theorem 5.1]{bf}\label{bf_formula}
We have
\begin{equation*}
[X]=\sum_{\lambda\subset(n-r-1)^{r-1}}\sigma_{\lambda}\sigma_{\widetilde{\lambda}},
\end{equation*}
where, for any partition $\lambda\subset(n-r-1)^{r-1}$, $\widetilde{\lambda}$ denotes its complement inside the rectangle $(n-r-1)^{r-1}$.
\end{thm}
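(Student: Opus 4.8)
The plan is to deduce the Berget--Fink formula from Klyachko's formula (Theorem \ref{k_formula}), which I may assume, by matching Schubert coefficients. Writing $c^{\mu}_{\lambda\nu}$ for Littlewood--Richardson coefficients and expanding each product in the Schubert basis of $\Gr(r,n)$, the Berget--Fink right-hand side becomes
\begin{equation*}
\sum_{\lambda\subset(n-r-1)^{r-1}}\sigma_{\lambda}\sigma_{\widetilde{\lambda}}=\sum_{\mu}N_\mu\,\sigma_{\mu},\qquad N_\mu:=\sum_{\lambda\subset(n-r-1)^{r-1}}c^{\mu}_{\lambda\widetilde{\lambda}}.
\end{equation*}
Since every pair $(\lambda,\widetilde{\lambda})$ satisfies $|\lambda|+|\widetilde{\lambda}|=(r-1)(n-r-1)$, only $\mu$ of the correct codimension occur, and classes leaving the box $(n-r)^r$ vanish automatically. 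Thus the theorem reduces to the single combinatorial identity $N_\mu=\Gamma^{\mu}_{r,n}$ for every $\mu\subset(n-r)^r$ with $|\mu|=(r-1)(n-r-1)$.

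The combinatorial heart, and the point at which the \emph{1-strip-less tableaux} of the title enter, is to prove $N_\mu=\Gamma^{\mu}_{r,n}$ by showing that both sides count the same fillings. First I would reinterpret $N_\mu$: the complementation $\widetilde{\lambda}_i=(n-r-1)-\lambda_{r-i}$ inside the rectangle is encoded by the identity $s_{\widetilde{\lambda}}(y_1,\dots,y_{r-1})=(y_1\cdots y_{r-1})^{n-r-1}s_\lambda(y_1^{-1},\dots,y_{r-1}^{-1})$, so that summing $c^{\mu}_{\lambda\widetilde{\lambda}}$ over $\lambda$ assembles a Cauchy-type kernel truncated to the rectangle. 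The upshot should be that $N_\mu$ equals the number of semistandard tableaux of shape $\mu$ with entries in $\{1,\dots,r\}$ subject to a constraint on the full rows of length $n-r$, which is the 1-strip-less condition. Separately, I would read Klyachko's alternating sum as an inclusion--exclusion: the leading term $|\ssyt_r(\mu)|$ counts all such tableaux, and each term $(-1)^j\binom{n}{j}|\ssyt_{r-j}(\mu^j)|$ removes the fillings in which $j$ of the full rows degenerate, the shrinking alphabet $\{1,\dots,r-j\}$ and reduced shape $\mu^j$ recording exactly the deletion of $j$ full rows.

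The hard part will be reconciling the two weightings. Klyachko's factor $\binom{n}{j}$ is geometric in origin---it counts choices among the $n$ coordinate directions of the torus---whereas $N_\mu$ is a purely Littlewood--Richardson quantity with no visible $n$-dependence beyond the box size; matching them requires pinning down the precise 1-strip-less condition and producing a sign-reversing involution (or an explicit bijection with lattice-word fillings) that collapses the alternating sum onto the 1-strip-less count. One plausible mechanism is a Vandermonde split $\binom{n}{j}=\sum_{a+b=j}\binom{n-r}{a}\binom{r}{b}$ separating the $n-r$ columns of a degenerating full row from the $r$ letters of the alphabet, which would be the key lemma to establish. A useful guide and consistency check is that, because $\lambda$ and $\widetilde{\lambda}$ range over complementary partitions in the Schubert box of $\Gr(r-1,n-2)$, the Berget--Fink expression is formally the image of the diagonal class $\sum_\lambda\sigma_\lambda\otimes\sigma_{\widetilde{\lambda}}$ of $\Gr(r-1,n-2)$ under cup product; verifying small cases such as $\mu=(2)$ in $\Gr(3,5)$, where Klyachko gives $6-\binom{5}{1}=1$ and the only nonzero Littlewood--Richardson term is $c^{(2)}_{(1),(1)}=1$, both confirms the identity and exposes the unique surviving filling as the one 1-strip-less tableau.
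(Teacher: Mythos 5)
Your logical skeleton is valid, but it is worth being clear about how it sits relative to the paper: the paper does \emph{not} prove Theorem \ref{bf_formula} at all --- it cites Berget--Fink's geometric proof (via equivariant classes of matrix orbit closures) and then derives the combinatorial identity of Theorem \ref{main_formula} ``for abstract reasons'' from Theorems \ref{k_formula} and \ref{bf_formula} together, the paper's actual content being a direct combinatorial proof of that identity. You propose the reverse deduction: assume Theorem \ref{k_formula}, prove $N_\mu:=\sum_{\lambda\subset(n-r-1)^{r-1}}c^{\mu}_{\lambda\widetilde{\lambda}}=\Gamma^{\mu}_{r,n}$, and conclude Theorem \ref{bf_formula}. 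The reduction is correct, and your check in $\Gr(3,5)$ is right. The problem is that the identity $N_\mu=\Gamma^{\mu}_{r,n}$ \emph{is} Theorem \ref{main_formula}, i.e., the whole paper, and your treatment of it is a plan rather than a proof: the decisive steps are flagged by ``the upshot should be,'' ``one plausible mechanism,'' and ``would be the key lemma to establish.''

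Concretely, two things are missing. First, the claim that the truncated Cauchy-type sum $N_\mu$ counts SSYTs of shape $\mu$ subject to ``a constraint on the full rows of length $n-r$'' is exactly Corollary \ref{pie_all}, and nothing in your outline establishes it; the paper obtains it from Coskun's geometric Littlewood--Richardson rule via a Mondrian-tableau recursion (Proposition \ref{one_step}, Corollary \ref{explicit_M}) and a nontrivial Pieri-rule bijection (Proposition \ref{term_stripless}). Moreover the correct condition is not about full rows: it concerns $(i,i+1)$-strips, which are length-$(n-r)$ sets of boxes spread across columns, so even the statement of your intermediate claim needs repair. Second, your reading of Klyachko's alternating sum as inclusion--exclusion over ``$j$ degenerating full rows'' weighted by $\binom{n}{j}$ does not hold as stated: the natural inclusion--exclusion over letters of the alphabet carries weight $\binom{r}{j}$ and yields $0$-strip-less tableaux (Lemma \ref{count_0strip}); the factor $\binom{n}{j}$ emerges only after additionally summing over all $\lambda$ (equivalently, over the gaps $a_i-a_{i-1}$), where the paper needs Proposition \ref{coeffs_fill0} together with the identity $\sum_{j=0}^{\ell}\binom{n-r+j-1}{j}\binom{r-j}{\ell-j}=\binom{n}{\ell}$ --- which is not the Vandermonde split $\binom{n}{j}=\sum_{a+b=j}\binom{n-r}{a}\binom{r}{b}$ you propose as your key lemma. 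Until these two steps are carried out, your argument is an outline of a genuinely different (and potentially interesting) symmetric-function route to Theorem \ref{main_formula}, not a proof of Theorem \ref{bf_formula}.
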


Therefore, for abstract reasons, the following holds.

\begin{thm}\label{main_formula}
We have
\begin{equation*}
\sum_{\lambda\subset(n-r-1)^{r-1}}\sigma_{\lambda}\sigma_{\widetilde{\lambda}}=\sum_{\mu} \Gamma^\mu_{r,n}\sigma_{\mu},
\end{equation*}
where the $\Gamma^\mu_{r,n}$ are as in Theorem \ref{k_formula}.
\end{thm}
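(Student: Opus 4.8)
The statement is, at the level of cohomology, immediate: by Theorem~\ref{bf_formula} the left-hand side equals the class $[X]$, and by Theorem~\ref{k_formula} the right-hand side is exactly the expansion of $[X]$ in the Schubert basis. Since two expressions for the same class must have the same coefficients against $\sigma_\mu$, the identity follows. This is the ``abstract'' argument signalled in the text, and it uses nothing beyond the two cited formulas.

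I would nonetheless want a proof that explains the equality combinatorially, independently of the geometric input. First I would expand the left-hand side in the Schubert basis, writing $\sigma_\lambda\sigma_{\widetilde\lambda}=\sum_\mu c^\mu_{\lambda,\widetilde\lambda}\sigma_\mu$ with $c^\mu_{\lambda,\widetilde\lambda}$ the Littlewood--Richardson coefficients (interpreted inside the ambient $r\times(n-r)$ rectangle, so that overflowing shapes are discarded). Comparing coefficients of $\sigma_\mu$ reduces Theorem~\ref{main_formula} to the numerical identity
\begin{equation*}
\sum_{\lambda\subset(n-r-1)^{r-1}}c^\mu_{\lambda,\widetilde\lambda}=\sum_{j=0}^{m(\mu)}(-1)^j\binom{n}{j}\,|\ssyt_{r-j}(\mu^j)|.
\end{equation*}
The right-hand side is an alternating sum while the left-hand side is a sum of nonnegative integers, so the crux is to produce a manifestly positive model for $\Gamma^\mu_{r,n}$ and then match it term by term with the Littlewood--Richardson count.

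The tool I would reach for is the principal specialization $|\ssyt_{r-j}(\mu^j)|=s_{\mu^j}(1^{r-j})$, which lets me read each summand as a count of bounded semistandard fillings; the factor $\binom{n}{j}$ and the sign then invite an inclusion--exclusion over the $m(\mu)$ maximal parts of $\mu$, to be realized by a sign-reversing involution on the ``overfull'' fillings. The fillings that survive the cancellation ought to be exactly the \emph{1-strip-less tableaux} of the abstract, and the remaining task is to biject these with the pairs $(\lambda,\widetilde\lambda)$ contributing to $c^\mu_{\lambda,\widetilde\lambda}$ on the other side. I expect the main obstacle to be precisely this matching: the Littlewood--Richardson rule and Klyachko's inclusion--exclusion encode the same quantity in structurally different ways, and building an explicit involution or bijection between them---while tracking how the rectangular complementation $\lambda\mapsto\widetilde\lambda$ interacts with the overflow condition that produces the signs---is where the genuine combinatorial work resides.
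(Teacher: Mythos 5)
Your first paragraph is a correct and complete proof of the statement as posed: since Theorem~\ref{bf_formula} and Theorem~\ref{k_formula} both compute the class $[X]$, the two Schubert expansions must coincide. This is precisely the ``abstract reasons'' argument that the paper itself invokes when stating Theorem~\ref{main_formula} in the introduction. It is, however, genuinely different from the paper's own proof, whose entire purpose is to establish the identity \emph{without} passing through the torus orbit closure. The paper pushes each product $\sigma_\lambda\sigma_{\widetilde{\lambda}}$ forward to $\Gr(r,n+r-1)$, encodes it as a Mondrian tableau, and uses Coskun's geometric Littlewood--Richardson rule to prove a recursion (Proposition~\ref{one_step}) whose solution is the explicit alternating expansion of Corollary~\ref{explicit_M} into products of Pieri-type classes. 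From there the Pieri rule and inclusion--exclusion identify the coefficient of $\sigma_\mu$ in $\sum_\lambda \sigma_\lambda\sigma_{\widetilde{\lambda}}$ in two ways: as the number of 1-strip-less SSYTs of shape $\mu$ (Corollary~\ref{pie_all}), and, via Lemma~\ref{count_0strip} together with the identity $\sum_{j}\binom{n-r+j-1}{j}\binom{r-j}{\ell-j}=\binom{n}{\ell}$, as Klyachko's alternating sum (Proposition~\ref{coeffs_fill0}); comparing the two gives the theorem. Your route buys brevity, but at the cost of resting on two substantial geometric theorems and yielding nothing beyond the bare identity; the paper's route is longer but self-contained at the level of Schubert calculus and produces the positive interpretation $\Gamma^\mu_{r,n}=\#\{\text{1-strip-less SSYTs of shape }\mu\}$ (Theorem~\ref{stripless_interpretation}) as a byproduct. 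Your remaining two paragraphs sketch a combinatorial program --- expand the left side in Littlewood--Richardson coefficients $c^\mu_{\lambda,\widetilde{\lambda}}$ and cancel Klyachko's signs by an involution --- but this is a plan rather than a proof (you defer exactly the hard bijection), and it is also not the paper's method: the paper never manipulates the coefficients $c^\mu_{\lambda,\widetilde{\lambda}}$ individually; instead the sum over all $\lambda$, i.e.\ over all gap vectors $(a_1,\ldots,a_{r-1})$, is what converts the type-constrained counts of Corollary~\ref{pie_refined} into the unconstrained 1-strip-less count, with the signs already resolved inside Corollary~\ref{explicit_M}.
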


The purpose of this paper is to give a direct combinatorial proof of Theorem \ref{main_formula}. Our main calculation applies Coskun's geometric Littlewood-Richardson rule \cite{coskun} to the left-hand side in such a way that the coefficients $\Gamma^\mu_{r,n}$ emerge naturally. In fact, we arrive at the following combinatorial interpretation for the coefficients $\Gamma^\mu_{r,n}$:

\begin{thm}\label{stripless_interpretation}
$\Gamma^\mu_{r,n}$ is equal to the number of \textit{1-strip-less} SSYTs (see \S\ref{yt_section}) of shape $\mu$, filled with entries $1,2,\ldots,r$. 
\end{thm}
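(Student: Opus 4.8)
Write $N_\mu$ for the number of 1-strip-less SSYT of shape $\mu$ with entries $1,\dots,r$ (as defined in \S\ref{yt_section}); the theorem asserts $\Gamma^\mu_{r,n}=N_\mu$, where $\Gamma^\mu_{r,n}$ is Klyachko's alternating sum from Theorem \ref{k_formula}. I would prove this identity by inclusion--exclusion (a sieve) on the set $\ssyt_r(\mu)$ of all semistandard fillings of $\mu$ by $1,\dots,r$. The $j=0$ summand $|\ssyt_r(\mu)|$ counts all such fillings, so the plan is to exhibit each higher summand $(-1)^j\binom{n}{j}|\ssyt_{r-j}(\mu^j)|$ as the signed count of fillings carrying $j$ distinguished ``1-strips,'' and then to cancel every filling that contains at least one 1-strip.

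The technical heart is a bijection
\begin{equation*}
\{(T,\mathcal{S}) : T\in\ssyt_r(\mu),\ \mathcal{S}\text{ a }j\text{-element set of }1\text{-strips of }T\}\ \longleftrightarrow\ \binom{[n]}{j}\times\ssyt_{r-j}(\mu^j),
\end{equation*}
under which deleting the strips of $\mathcal{S}$ from $T$ peels off $j$ maximal-length rows (the parts equal to $n-r$) and suppresses the $j$ largest letters, producing a genuine element of $\ssyt_{r-j}(\mu^j)$, while the reconstruction data is recorded by a $j$-subset of $[n]$. Granting this, Klyachko's sum rearranges, over $T\in\ssyt_r(\mu)$, as
\begin{equation*}
\sum_{j}(-1)^j\binom{n}{j}|\ssyt_{r-j}(\mu^j)| = \sum_{T}\sum_{j}(-1)^j\binom{s(T)}{j} = \sum_{T}(1-1)^{s(T)},
\end{equation*}
where $s(T)$ is the number of 1-strips of $T$; this vanishes unless $s(T)=0$ and equals $1$ precisely for the 1-strip-less fillings, giving $\Gamma^\mu_{r,n}=N_\mu$. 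Equivalently, one may realize the same cancellation as a sign-reversing involution that inserts or deletes a canonically chosen 1-strip, the fixed points being exactly the strip-less tableaux.

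The main obstacle is the bijection itself, and in particular the appearance of the ambient factor $\binom{n}{j}$ rather than a quantity intrinsic to the tableau combinatorics. I expect the precise definition of a 1-strip in \S\ref{yt_section} to be engineered so that (i) the 1-strips of any fixed filling are pairwise disjoint, so that an arbitrary $j$-subset may be selected and $\#\{\mathcal{S}:|\mathcal{S}|=j\}=\binom{s(T)}{j}$; (ii) deleting a 1-strip is confluent, i.e.\ independent of the order of deletion, so that the peeling $T\mapsto T\setminus\mathcal{S}$ is well defined; and (iii) reinserting $j$ maximal rows together with restoring the $j$ suppressed letters is governed by exactly $\binom{n}{j}$ choices. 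Verifying (iii) --- understanding why the reinsertion data is naturally a $j$-subset of the $n$-element index set of $\Gr(r,n)$, and that each choice yields a valid semistandard filling exactly once --- is where the real work lies; this is presumably the point at which the geometry enters, since the same $n$ indexes the torus weights and the ambient coordinates. As a consistency check I would test the identity against Theorem \ref{main_formula}: computing the left-hand side $\sum_{\lambda}\sigma_\lambda\sigma_{\widetilde{\lambda}}$ by Coskun's rule should independently produce $\sum_\mu N_\mu\sigma_\mu$, and matching coefficients against Theorem \ref{k_formula} recovers $\Gamma^\mu_{r,n}=N_\mu$.
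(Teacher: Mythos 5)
Your proposal has a genuine gap at what you yourself identify as its technical heart: the bijection between pairs $(T,\mathcal{S})$, with $T\in\ssyt_r(\mu)$ and $\mathcal{S}$ a $j$-element set of 1-strips of $T$, and pairs consisting of a $j$-element subset of $\{1,\ldots,n\}$ and an element of $\ssyt_{r-j}(\mu^j)$, cannot exist, because the two sets have different cardinalities. Take $r=3$, $n=6$, $\mu=(3,1,0)$, so that $m(\mu)=1$ and $\mu^1=(1,0)$. The shape $\mu$ admits exactly two strips (each must use one box from each of the three columns): the top row, and the set consisting of the box in row 2 together with the last two boxes of row 1. Going through the $15$ tableaux in $\ssyt_3(\mu)$, one finds $14$ pairs (tableau, 1-strip) --- for instance, the tableau with top row $1,2,2$ and entry $2$ below carries \emph{both} strips as 1-strips --- whereas Klyachko's $j=1$ term is $\binom{6}{1}|\ssyt_2((1,0))|=12$. (Counting a strip once for each valid label $i$ gives $16$, still not $12$.) Moreover, two of the tableaux carry a $2$-element set of 1-strips, while Klyachko's sum has no $j=2$ term at all, since it stops at $j=m(\mu)=1$. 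The sieve itself is of course valid: $15-14+2=3$ is the number of 1-strip-less fillings, in agreement with $\Gamma^\mu_{3,6}=15-12=3$. But it is a \emph{different} decomposition of the same number; Klyachko's alternating sum is not the inclusion--exclusion over sets of 1-strips, so no construction in your step (iii) can succeed. Your supporting assumption (i) also fails in this example (the two 1-strips overlap in two boxes), and deleting a 1-strip cannot ``suppress a largest letter,'' since a 1-strip involves two consecutive values $i,i+1$ rather than a single one; the clean delete-and-relabel argument works only for 0-strips, which is exactly Lemma \ref{count_0strip}, where the binomial is the intrinsic $\binom{r}{j}$, not $\binom{n}{j}$.

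The paper never interprets $\binom{n}{j}|\ssyt_{r-j}(\mu^j)|$ intrinsically on tableaux of shape $\mu$, and the counterexample above shows why it cannot. Instead, both sides of the theorem are identified with the coefficient of $\sigma_\mu$ in $\sum_{a_1<\cdots<a_{r-1}}M(a_1,\ldots,a_{r-1};n)$, the Coskun/Pieri expansion of the Berget--Fink sum: a 1-strip inclusion--exclusion performed at \emph{fixed content} (Proposition \ref{term_stripless}, Corollaries \ref{pie_refined} and \ref{pie_all}) yields the 1-strip-less count, while a 0-strip inclusion--exclusion (Lemma \ref{count_0strip}) combined with the count $\binom{n-\ell-1}{r-\ell}$ of choices of the positions $a_i$ (Proposition \ref{coeffs_fill0}) and the convolution identity $\sum_j\binom{n-r+j-1}{j}\binom{r-j}{\ell-j}=\binom{n}{\ell}$ produces Klyachko's terms. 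That is where $\binom{n}{j}$ really comes from: it mixes positional data (the $a_i$, which is how $n$ enters) with entry data, and is not a count of strip-subsets. Finally, note that your closing ``consistency check'' is essentially the paper's actual proof --- and it needs no geometry, since both quantities are extracted from the same Schubert-calculus expansion --- but expanding $\sum_\lambda\sigma_\lambda\sigma_{\widetilde{\lambda}}$ into $\sum_\mu N_\mu\sigma_\mu$ is precisely the hard content of Sections \ref{LR_sec} and \ref{stripless_sec} that your proposal leaves unproved.
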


In particular, in the alternating sum of Theorem \ref{k_formula}, the first term dominates, and we get a \emph{non-negative} interpretation of the integers $\Gamma^\mu_{r,n}$. In fact, the following version of Theorem \ref{stripless_interpretation} was known to Klyachko \cite[Theorem 3.3]{k2}: the number $\Gamma^\mu_{r,n}$ was shown to count the number of \emph{standard} Young tableaux of shape $\overline{\lambda}$, the complement of $\lambda$ inside the rectangle $(n-r)^r$, with exactly $(r-1)$ descents. Such objects are naturally in bijection with 1-strip-less SSYTs; we describe this bijection (explained to us by Philippe Nadeau) in the appendix.

In our calculation, the 1-strip-less SSYTs are the naturally occurring objects. As we will see in \S\ref{LR_sec}, the geometric Littlewood-Richardson rule allows us to express the products $\sigma_{\lambda}\sigma_{\widetilde{\lambda}}$ in terms of simple Schubert classes, see Corollary \ref{explicit_M}. From here, the Pieri rule allows us on the one hand to interpret the coefficients in the Schubert basis as counts of 1-strip-less tableaux in \S\ref{stripless_sec}, and on the other hand to recover Klyachko's formula in \S\ref{klyachko_comparison_sec}.

\subsection{Acknowledgments}
We thank Andrew Berget, Izzet Coskun, Alex Fink, Maria Gillespie, Philippe Nadeau, Andrew Reimer-Berg, Hunter Spink, and Vasu Tewari for helpful comments and conversations, as well as the referees for their comments and for encouraging the author to include additional details. During the preparation of this article, the author received support from the MATH+ incubator grant ``Tevelev degrees.'' 

%an NSF postdoctoral fellowship, grant DMS-2001976

\section{Preliminaries}\label{conventions}

\subsection{Young tableaux}\label{yt_section}

Let $\lambda=(\lambda_1,\ldots,\lambda_r)$ be a partition. Our convention throughout is that the parts of $\lambda$ are non-increasing $(\lambda_1\ge\cdots\ge\lambda_r\ge0)$, and furthermore that $\lambda$ is defined implicitly with respect to an integer $n$ for which $\lambda_1\le n-r$ (equivalently, $\lambda\subset(n-r)^r$). The Young diagram of $\lambda$ is taken to be left- and upward-justified, with larger parts depicted on top. 

The \emph{complement} of $\lambda$, denoted $\overline{\lambda}$, is the partition $(n-r-\lambda_r,\ldots,n-r-\lambda_1)$. Pictorially, the complement $\overline{\lambda}$ is obtained as the (rotated) complement of $\lambda$ inside the rectangle $(n-r)^r$. Note that we distinguish the notation $\overline{\lambda}$ and the notation $\widetilde{\lambda}$ appearing in Theorem \ref{bf_formula}.

Below, the Young diagram of the partition $\lambda=(10,9,4,2)$, defined with respect to $n=14$ is shown. Its complement (before rotation) $\overline{\lambda}=(8,6,1,0)$ is shaded in gray.

\begin{equation*}
\ydiagram[*(white) ]
{10,9,4,2}
*[*(gray)]{10,10,10,10}
\end{equation*}

A \emph{strip} $S$ of $\lambda$ is a subset of $n-r$ boxes in the Young diagram of $\lambda$ satisfying the following two properties:
\begin{itemize}
\item  No two boxes of $S$ lie in the same column.
\item Given any distinct boxes $b_1,b_2$ of $S$, if $b_1$ lies in a column to the left of $b_2$, then $b_1$ does not also lie in a row above $b_2$.
\end{itemize}
An example of a strip in the partition $\lambda=(10,9,4,2)$, where we take $n=14$, is shaded below. However, the same set of boxes is \emph{not} a strip if $\lambda$ is defined with respect to $n>14$.
%An equivalent characterization of a partial strip is a subset boxes in the Young diagram of $\lambda$ for which there exists a semi-standard Young tableau (defined below) of shape $\lambda$ and an integer $j$, such that the boxes filled with the integer $j$ are exactly those in $S$. An example of a partial strip in the partition $\lambda=(10,9,4,2)$ is shaded below.
%\begin{equation*}
%\ydiagram[*(white) ]
%{6,4}
%*[*(gray)]{6+3,4+2,1+2}
%*[*(white)]{9+0+1,6+3,4,2}
%\end{equation*}
%
%A \emph{strip} $S$ of $\lambda$ is a partial strip that has $n-r$ boxes, the maximum possible. In this case, property (S3) is immediate from properties (S1) and (S2). An example of a strip in the partition $\lambda=(10,9,4,2)$, where we take $n=14$, is shaded below. However, the same set of boxes is \emph{not} a strip if $\lambda$ is defined with respect to $n>14$, but only a partial strip.
\begin{equation*}
\ydiagram[*(white) ]
{6,4}
*[*(gray)]{6+4,4+2,4}
*[*(white)]{10+0,6+3,4+0,2}
\end{equation*}
%We will deal primarily with strips, but partial strips will arise in Proposition \ref{term_stripless} in the context of the Pieri rule.

A \emph{semi-standard Young tableau (SSYT)} of shape $\lambda$ is a filling of the boxes of $\lambda$ with the entries $1,2,\ldots,r$ so that entries increase weakly across rows and strictly down columns. We will often abuse notation, using the letter $\lambda$ to denote either a partition or a SSYT of that shape. The number of SSYTs of shape $\lambda$ is denoted $|\ssyt_r(\lambda)|$, and is given by the formula
\begin{equation*}
|\ssyt_r(\lambda)|=s_\lambda(1^r)=\prod_{u\in\lambda}\frac{r+c(u)}{h(u)},
\end{equation*}
see \cite[Corollary 7.21.4]{EC2}. Here, $s_\lambda(1^r)$ denotes the Schur function associated to $\lambda$, specialized so that the first $r$ variables are equal to 1 and all others are equal to zero. In the last formula, the product is over all boxes $u$, and if $u$ is in the $i$-th row and $j$-th column of $\lambda$, then by definition, $c(u)=j-i$ and $h(u)$ is the total number of boxes either to the right (and in the same row) of, below (and in the same column), or equal to $u$.

The \emph{$k$-weight} $w_k(\lambda)$ of a SSYT $\lambda$ is the number of appearances of the entry $k$. The \emph{type} of $\lambda$ is the tuple $(w_1(\lambda),w_2(\lambda),\ldots,w_r(\lambda))$.

For $i=1,2,\ldots,r$, an \emph{$(i)$-strip} of a SSYT is a strip, all of whose boxes are filled with the entry $i$. A \emph{0-strip} (written without parentheses) is, by definition, an $(i)$-strip for some $i$. A SSYT is \emph{0-strip-less} if it contains no 0-strips. Below, the SSYT of shape $\lambda=(10,9,4,2)$ (with $n=14$) has a $(3)$-strip and no other 0-strips. If we take instead $n>14$, then the same SYT is 0-strip-less.
\begin{equation*}
\begin{ytableau}
1 & 1 & 1 & 1 & 1 & 2 & 3 & 3 & 3 & 3 \\
2 & 2 & 2 & 2 & 3 & 3 & 4 & 4 & 4\\
3 & 3 & 3 & 3\\
4 & 4
\end{ytableau}
\end{equation*}

Let $|\ssyt^0_r(\lambda)|$ denote the number of 0-strip-less SSYTs of shape $\lambda$ (with entries $1,2,\ldots,r$).
\begin{lem}\label{count_0strip}
Given a partition $\lambda\subset(n-r)^r$, adopt the notation $m(\lambda)$ and $\lambda^j$, for $j=0,1,\ldots,m(\lambda)$, of the statement of Theorem \ref{k_formula}. Then, we have
\begin{equation*}
|\ssyt^0_r(\lambda)|=\sum_{j=0}^{m(\lambda)}(-1)^j\binom{r}{j}|\ssyt_{r-j}(\lambda^j)|.
\end{equation*}
\end{lem}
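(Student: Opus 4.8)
The plan is to prove the identity by inclusion-exclusion on the set of entries that form $0$-strips, arranged so that the binomial coefficient $\binom{r}{j}$ arises as the number of ways of choosing $j$ such entries. If $\lambda_1<n-r$ there are fewer than $n-r$ columns, so no strip can occur, every SSYT is $0$-strip-less, $m(\lambda)=0$, and both sides equal $|\ssyt_r(\lambda)|$; hence I may assume $\lambda_1=n-r$, i.e. $m:=m(\lambda)\ge 1$.

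First I would record a clean characterization of $(i)$-strips inside an SSYT. Because entries strictly increase down columns, the cells carrying a fixed entry $i$ already lie in distinct columns; moreover, if $i$ occupies row $p$ of column $c$ and row $q$ of column $c+1$, then $p\ge q$ (otherwise the cell in row $p$, column $c+1$ would be both $\ge i$, by weak increase along row $p$, and $<i$, by strict increase down column $c+1$). Thus the rows occupied by $i$ are weakly decreasing as the column index grows, which is exactly the anti-chain condition defining a strip. Consequently an SSYT of shape $\lambda$ contains an $(i)$-strip \emph{if and only if} the entry $i$ appears in each of the $n-r$ columns. For $J\subseteq\{1,\dots,r\}$ let $N_J$ count the SSYTs of shape $\lambda$ in which every $i\in J$ appears in every column; inclusion-exclusion then gives $|\ssyt^0_r(\lambda)|=\sum_{J}(-1)^{|J|}N_J$.

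The heart of the argument is to show $N_J=|\ssyt_{r-j}(\lambda^j)|$ when $j:=|J|\le m$, and $N_J=0$ otherwise; since there are $\binom{r}{j}$ subsets of size $j$, this yields the stated formula. The vanishing for $j>m$ is immediate, as the rightmost column has only $m$ cells and cannot contain $j>m$ prescribed entries. For $j\le m$ I would build an explicit bijection, handled inductively by peeling off one entry of $J$ at a time, largest first. Deleting from each column the single cell carrying the largest remaining entry of $J$ and sliding the cells below it upward lowers every column height by one; a short conjugate-partition computation (using $\lambda_1=n-r$, so that all $n-r$ columns are present) shows the new shape is $\lambda^1$. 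Relabeling the surviving entries order-preservingly onto $\{1,\dots,r-1\}$ fixes the remaining, smaller entries of $J$, which therefore still meet every column, so I may recurse on $\lambda^1$ with $r-1$ entries; after $j$ steps this lands in $\ssyt_{r-j}(\lambda^j)$.

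The hard part will be checking that this \emph{delete-and-slide} operation actually returns a semistandard tableau, i.e. that rows stay weakly increasing (columns remain strictly increasing automatically, since we only delete from strictly increasing sequences). For the rows I would compare adjacent columns $c,c+1$ entrywise after the deletion: writing $p\ge q$ for the rows of the deleted entry in the two columns (the inequality from the characterization step), the comparison splits into the cases $k<p,q$ and $k\ge p,q$, which follow at once from the monotonicity of the original SSYT, and the case $q\le k<p$, where weak increase along a row followed by strict increase down a column settles it; the remaining case $p\le k<q$ is ruled out precisely by $p\ge q$. Invertibility is the reverse process, reinserting each deleted entry into its unique admissible position in every column, so once the forward map is shown to land in SSYTs the bijection follows. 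I expect this monotonicity verification to be the only genuinely delicate point, with everything else reducing to bookkeeping.
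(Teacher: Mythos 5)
Your proposal is correct and takes essentially the same approach as the paper: inclusion-exclusion over the set of entries required to form $(i)$-strips, with the coefficient $\binom{r}{j}$ counting the $j$-element subsets, combined with a bijection that deletes the boxes carrying those entries and re-shifts to produce an SSYT of shape $\lambda^j$ on $r-j$ letters. The only difference is that you spell out details the paper treats as immediate (the characterization of an $(i)$-strip as ``entry $i$ appears in every column,'' the vanishing of the count when $j>m(\lambda)$, and the semistandardness check for the delete-and-slide map, done one entry at a time rather than all at once).
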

\begin{proof}
Given any $j$-element subset of $S\subset \{1,2,\ldots,r\}$, the number $|\ssyt_{r-j}(\mu^j)|$ counts SSYTs of shape $\mu$ with an $(s)$-strip for any $s\in S$. Indeed, deleting all boxes containing an entry $s\in S$ in such a SSYT yields (after appropriate re-shifting) an SSYT of shape $\mu^j$ filled with entries lying in $\{1,2,\ldots,r\}-S$; this is easily seen to be a bijection. The factor $\binom{r}{j}$ enumerates subsets $S$ of size $j$. The lemma now follows from Inclusion-Exclusion.
\end{proof}

Similarly, for $i=1,2,\ldots,r-1$, an \emph{$(i,i+1)$-strip} of a SSYT is a strip, all of whose boxes are filled with the entry $i$ or $i+1$, and for which all instances of $i$ all appear to the left of all instances of $i+1$. By convention, an $(i)$-strip is both an $(i,i+1)$- and an $(i-1,i)$-strip. A \emph{1-strip} (written without parentheses) is, by definition, an $(i,i+1)$-strip for some $i$. A SSYT is \emph{1-strip-less} if it contains no 1-strips (and therefore no 0-strips). Note that an SSYT of shape $\lambda$ with $\lambda_1<n-r$ is automatically 1-strip-less. Below, the SSYT of shape $\lambda=(10,9,4,2)$ has a unique $(2,3)$-strip and no other 1-strips.
\begin{equation*}
\begin{ytableau}
1 & 1 & 1 & 1 & 2 & 2 & 3 & 3 & 3 & 3 \\
2 & 2 & 2 & 2 & 3 & 4 & 4 & 4 & 4\\
3 & 3 & 3 & 4\\
4 & 4
\end{ytableau}
\end{equation*}

\subsection{Schubert calculus}\label{schubert_sec}

Let $W$ be a vector space of dimension $n$, assumed over $\bC$ for concreteness. Then, $\Gr(r,W)\cong\Gr(r,n)$ is the Grassmannian of $r$-dimensional subspaces of $W$.

Fix a complete flag $F$ of subspaces
\begin{equation*}
0=F_0\subset F_1\subset\cdots \subset F_{n}=W.
\end{equation*}
Let $\lambda=(\lambda_1,\ldots,\lambda_r)$ be a partition, with $\lambda_1\le n-r$. Then, the Schubert variety $\Sigma^F_{\lambda}\subset\Gr(r,W)$ is by definition the closed subvariety of (complex) codimension $|\lambda|$ consisting of subspaces $V\subset W$ of dimension $r$ for which
\begin{equation*}
\dim(V\cap F_{n-r+i-\lambda_{i}})\ge i
\end{equation*}
for $i=1,\ldots,r$. The class of $\Sigma^F_{\lambda}$ in $H^{2|\lambda|}(\Gr(r,W))$ is denoted $\sigma_\lambda$.

\section{Application of the Littlewood-Richardson rule}\label{LR_sec}

In this section, we express the products $\sigma_{\lambda}\sigma_{\widetilde{\lambda}}$ appearing on the left hand side of Theorem \ref{main_formula} in terms of simple Schubert classes. The main result is Corollary \ref{explicit_M}. We apply Coskun's Littlewood-Richardson rule \cite{coskun}; we will only need some aspects of the algorithm, which we review as we need them. Other versions of the Littlewood-Richardson rule can presumably prove the same formula; our presentation is largely idiosyncratic.

We first carry out \cite[Algorithm 3.19]{coskun} to express the class $\sigma_{\lambda}\sigma_{\widetilde{\lambda}}$ in terms of a \emph{Mondrian tableau}. Geometrically, this will interpret $\sigma_{\lambda}\sigma_{\widetilde{\lambda}}$ as the class of a subvariety of $\Gr(r,n)$ defined generically by the condition that $V\subset\bC^n$ contain a basis of vectors $v_1,\ldots,v_r$, where each $v_j$ is constrained to lie in a particular subspace of $\bC^n$.

Write $\lambda=(\lambda_1,\ldots,\lambda_{r-1},0)$, so that $\widetilde{\lambda}=(n-r-1-\lambda_{r-1},\ldots,n-r-1-\lambda_1,0)$. We also write $\lambda_r=0$ and $\lambda_0=n-r-1$. Fix a basis $e_1,\ldots,e_n$ of $\bC^n$. Then, $\sigma_{\lambda}$ is the class of the Schubert variety of $V\in\Gr(r,n)$ satisfying
\begin{equation*}
\dim(V\cap\langle e_1,\ldots,e_{n-r-\lambda_j+j}\rangle)\ge j
\end{equation*}
for $j=1,\ldots,r$, and $\sigma_{\widetilde{\lambda}}$ is the class of the Schubert variety of $V\in\Gr(r,n)$ satisfying
\begin{equation*}
\dim(V\cap\langle e_{n-r-\lambda_{j-1}+j-1},\ldots,e_{n}\rangle)\ge r+1-j
\end{equation*}
for $j=1,\ldots,r$.

Because these two Schubert varieties are defined with respect to the transverse flags
\begin{align*}
0\subset\langle e_1\rangle\subset\langle e_1,e_{2}\rangle\subset\cdots\subset\bC^n,\\
0\subset\langle e_{n}\rangle\subset\langle e_{n-1},e_{n}\rangle\subset\cdots\subset\bC^n,
\end{align*}
the product $\sigma_{\lambda}\sigma_{\widetilde{\lambda}}$ is represented by the generically transverse intersection of these two Schubert varieties. Combining the two conditions above for each $j$, we obtain the $r$ conditions
\begin{equation*}
\dim(V\cap\langle e_{n-r-\lambda_{j-1}+j-1},\ldots,e_{n-r-\lambda_j+j}\rangle)\ge 1.
\end{equation*}
for $j=1,\ldots,r$, as subspaces of $V$ of dimensions $j,r+1-j$ must intersect non-trivially. By \cite[Theorem 3.21]{coskun} and its proof, these conditions suffice to understand the class $\sigma_\lambda\sigma_{\widetilde{\lambda}}$. More precisely:
\begin{prop}
Let $Z$ be the closure on $\Gr(r,n)$ of the locus of subspaces $V\subset\bC^n$ containing a basis $v_1,\ldots,v_r$ with 
\begin{equation*}
v_j\in \langle e_{a_{j-1}},\ldots,e_{a_j}\rangle,
\end{equation*}
$j=1,2,\ldots,r$. Here, we write $a_j=n-r-\lambda_j+j$ for $j=1,2,\ldots,r$, in addition to $a_0=1$.

Then, the class of $Z$ in $H^{2(r-1)(n-r-1)}(\Gr(r,n))$ is equal to $\sigma_\lambda\sigma_{\widetilde{\lambda}}$.
\end{prop}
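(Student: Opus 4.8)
The plan is to identify $Z$ with the intersection of the two Schubert varieties $\Sigma_\lambda$ and $\Sigma_{\widetilde{\lambda}}$ cut out by the two transverse flags displayed above. Since those flags are opposite, this intersection is a Richardson variety: it is irreducible, of pure dimension $r(n-r)-(r-1)(n-r-1)=n-1$, and carries the class $\sigma_\lambda\sigma_{\widetilde{\lambda}}$ by the generic transversality already noted (this is also the content of Coskun's Theorem 3.21). Writing $A_j=\langle e_1,\ldots,e_{a_j}\rangle$, $B_j=\langle e_{a_{j-1}},\ldots,e_n\rangle$, and $C_j=A_j\cap B_j=\langle e_{a_{j-1}},\ldots,e_{a_j}\rangle$ for the $j$-th window, the defining conditions of $\Sigma_\lambda$ and $\Sigma_{\widetilde{\lambda}}$ read $\dim(V\cap A_j)\ge j$ and $\dim(V\cap B_j)\ge r+1-j$. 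Because $Z$ is itself irreducible (being the closure of the image of the map $\phi$ introduced below) and has dimension at most $n-1$, it suffices to prove the inclusion $Z\subseteq\Sigma_\lambda\cap\Sigma_{\widetilde{\lambda}}$ together with $\dim Z=n-1$; equality of these two irreducible varieties then forces $[Z]=\sigma_\lambda\sigma_{\widetilde{\lambda}}$.

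The inclusion $Z\subseteq\Sigma_\lambda\cap\Sigma_{\widetilde{\lambda}}$ is elementary. If $V$ contains a basis $v_1,\ldots,v_r$ with $v_j\in C_j$, then since $a_0<a_1<\cdots<a_r$ (indeed $a_j-a_{j-1}=\lambda_{j-1}-\lambda_j+1\ge 1$) we have $C_i\subseteq A_j$ for $i\le j$ and $C_i\subseteq B_j$ for $i\ge j$. Hence $v_1,\ldots,v_j\in V\cap A_j$ and $v_j,\ldots,v_r\in V\cap B_j$, and as these are parts of a basis they are independent, giving $\dim(V\cap A_j)\ge j$ and $\dim(V\cap B_j)\ge r+1-j$. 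Thus every subspace admitting such a basis lies in $\Sigma_\lambda\cap\Sigma_{\widetilde{\lambda}}$, and passing to closures yields $Z\subseteq\Sigma_\lambda\cap\Sigma_{\widetilde{\lambda}}$.

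For the dimension, consider the map $\phi$ defined on a dense open subset of $\bP(C_1)\times\cdots\times\bP(C_r)$ sending $([v_1],\ldots,[v_r])$ to $\langle v_1,\ldots,v_r\rangle$, whose image closure is exactly $Z$; its source is irreducible of dimension $\sum_j(\dim C_j-1)=\sum_j(a_j-a_{j-1})=a_r-a_0=n-1$. The fiber of $\phi$ over a subspace $V$ is $\prod_j\bP(V\cap C_j)$, which is finite precisely when each $V\cap C_j$ is a single line. So the assertion $\dim Z=n-1$ is equivalent to generic finiteness of $\phi$, i.e. to $\dim(V\cap C_j)=1$ for all $j$ on a dense open subset. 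This is the crux and the main obstacle: it is exactly a transversality statement, and it is the input organized by Coskun's Theorem 3.21 and its proof, which arrange precisely these overlapping windows into a Mondrian tableau and show that the associated variety realizes the product. One may also check generic finiteness directly, by exhibiting a single $V$ whose spanning vectors are in echelon form with distinct leading coordinates $e_{a_1},\ldots,e_{a_r}$; there the consecutive windows meet only in the line $\langle e_{a_{j-1}}\rangle$ and each $V\cap C_j$ is visibly one-dimensional, and openness of these conditions propagates them to a dense subset. Once $\dim Z=n-1$ is established, the irreducible $Z$ fills the irreducible Richardson variety $\Sigma_\lambda\cap\Sigma_{\widetilde{\lambda}}$, and therefore $[Z]=\sigma_\lambda\sigma_{\widetilde{\lambda}}$.
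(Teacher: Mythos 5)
Your route is genuinely different from the paper's. The paper does not argue this proposition from scratch: having observed that the two Schubert varieties are defined by transverse flags (so that their generically transverse intersection represents $\sigma_\lambda\sigma_{\widetilde{\lambda}}$), and that membership in both forces the window conditions $\dim(V\cap\langle e_{a_{j-1}},\ldots,e_{a_j}\rangle)\ge 1$, it cites \cite[Theorem 3.21]{coskun} and its proof for the statement that the variety $Z$ cut out by these conditions carries the product class. You instead identify $Z$ with the Richardson variety $\Sigma_\lambda\cap\Sigma_{\widetilde{\lambda}}$ directly: the elementary inclusion $Z\subseteq\Sigma_\lambda\cap\Sigma_{\widetilde{\lambda}}$, the dimension count $\dim Z=n-1$ via the parametrization from $\bP(C_1)\times\cdots\times\bP(C_r)$, and then equality of irreducible varieties of the same dimension. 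This buys self-containedness (no Mondrian tableau machinery is needed for this step), but it leans on the nontrivial fact that a Richardson variety is irreducible; that is a theorem of Richardson, not an observation, and it should be cited, since it is exactly what converts ``closed subvariety of the same dimension'' into equality. (A minor imprecision: the fiber of $\phi$ over a point $V$ of the image is the open locus of spanning tuples inside $\prod_j\bP(V\cap C_j)$, not the full product; this does not affect the argument.)

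One step is wrong as written, though easily repaired: your explicit witness for generic finiteness. Read literally, ``echelon form with distinct leading coordinates $e_{a_1},\ldots,e_{a_r}$'' forces each $v_j$ to be a multiple of $e_{a_j}$, since the window of $v_j$ ends at $a_j$; so $V=\langle e_{a_1},\ldots,e_{a_r}\rangle$. But this $V$ fails the very condition you need: for $j\ge 2$ both $e_{a_{j-1}}$ and $e_{a_j}$ lie in $V\cap C_j$, so $\dim(V\cap C_j)=2$. The overlaps $C_{j-1}\cap C_j=\langle e_{a_{j-1}}\rangle$ are precisely what make coordinate subspaces bad witnesses here. A correct witness is any choice of $v_j$ whose coefficients at both endpoints $e_{a_{j-1}}$ and $e_{a_j}$ of its window are nonzero, e.g.\ $v_j=e_{a_{j-1}}+e_{a_j}$: if $\sum_k t_kv_k\in C_j$, then the coordinate at $e_{a_0}$ forces $t_1=0$ when $j>1$, and inductively the coordinate at $e_{a_{k-1}}$, which equals $t_{k-1}+t_k=t_k$, forces $t_k=0$ for all $k<j$; symmetrically, starting from the coordinate at $e_{a_r}=e_n$ and descending, $t_k=0$ for all $k>j$. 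Hence $V\cap C_j=\langle v_j\rangle$ for every $j$, the fiber of $\phi$ there is a single point, and upper semicontinuity of fiber dimension gives generic finiteness, i.e.\ $\dim Z=n-1$. With this repair, and a citation for irreducibility of Richardson varieties, your proof is complete.
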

The subvariety $Z$ is represented by the \emph{Mondrian tableau} $M$ depicted in Figure \ref{mondrian1} in the case $r=4$.
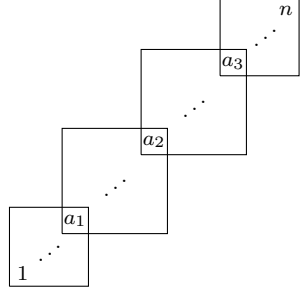
\begin{figure}
\caption{The \emph{Mondrian tableau} associated to the class $\sigma_\lambda\sigma_{\widetilde{\lambda}}$ when $r=4$.}\label{mondrian1}
\begin{tikzpicture} [xscale=0.35,yscale=0.35]
			%\draw [help lines] (0,0) grid (30, 15);
			%Spine
                \node at (0.5,0.5) {\tiny $1$};
                
               \node at (1.5,1.5) {\tiny $\iddots$};
               
                \node at (2.5,2.5) {\tiny $a_1$};
                
		 \node at (4,4) {\tiny $\iddots$};
                
                \node at (5.5,5.5) {\tiny $a_2$};
                
                		 \node at (7,7) {\tiny $\iddots$};
                
                \node at (8.5,8.5) {\tiny $a_3$};
                
          \node at (9.7,9.7) {\tiny $\iddots$};
                
                \node at (10.5,10.5) {\tiny $n$};
                
\draw (0,0) -- (3,0) -- (3,3) -- (0,3) -- (0,0);
\draw (2,2) -- (6,2) -- (6,6) -- (2,6) -- (2,2);
\draw (5,5) -- (9,5) -- (9,9) -- (5,9) -- (5,5);
\draw (8,8) -- (11,8) -- (11,11) -- (8,11) -- (8,8);
\end{tikzpicture}
\end{figure}
The basis elements $e_{1},\ldots,e_{n}$ (represented just by the subscripts for brevity) are recorded along on the diagonal, and the $r$ subspaces $\langle e_{a_{j-1}},\ldots,e_{a_j}\rangle$ are represented by squares $M_j$ containing the corresponding indices. Note that the $M_j$ are pairwise disjoint except for the intersections $M_j\cap M_{j+1}=\{a_j\}$. 

To compute the class associated to $M$ in $H^{2(r-1)(n-r-1)}(\Gr(r,n))$, we will (implicitly) compare it to that of a Mondrian tableau $M'$ with \emph{disjoint} squares $M'_j$ of the same sizes as the $M_j$. The disjointness implies that the $r$ squares impose \emph{independent} conditions on $V$, and the class associated to $M'$ will be given simply by the Pieri rule. However, for such an $M'$ to exist, one needs to enlarge the ambient vector space in which the subspaces live. 

We therefore pass from $\bC^n=\langle e_1,\ldots,e_n\rangle$ to $\bC^{n+r-1}=\langle e_{-(r-2)},\ldots,e_n\rangle$. Abusing notation, we let $M$ denote the Mondrian tableau consisting of the already-defined squares $M_j$, but now with the extra $r-1$ basis elements added in the southwest, and $Z$ denote the closure of the locus on $\Gr(r,n+r-1)$ of subspaces $V=\langle v_1,\ldots,v_r\rangle$ subject to the same conditions $v_j\in \langle e_{a_{j-1}},\ldots,e_{a_j}\rangle$ as before.

The Mondrian tableaux $M,M'$, now defining classes on $\Gr(r,n+r-1)$, are depicted in Figure \ref{mondrian2}.
\begin{figure}
\caption{Comparison of the Mondrian tableaux $M,M'$ when $r=4$. Here, $r-1=3$ basis elements have been added in the southwest corner.}\label{mondrian2}
\begin{tikzpicture} [xscale=0.35,yscale=0.35]
			%\draw [help lines] (0,0) grid (30, 15);
			%Spine
			
			\node at (-2.5,-2.5) {\tiny $-2$};		
		\node at (-1.5,-1.5) {\tiny $-1$};		
	                 \node at (-0.5,-0.5) {\tiny $0$};		
                \node at (0.5,0.5) {\tiny $1$};
                
               \node at (1.5,1.5) {\tiny $\iddots$};
               
                \node at (2.5,2.5) {\tiny $a_1$};
                
		 \node at (4,4) {\tiny $\iddots$};
                
                \node at (5.5,5.5) {\tiny $a_2$};
                
                		 \node at (7,7) {\tiny $\iddots$};
                
                \node at (8.5,8.5) {\tiny $a_3$};
                
          \node at (9.7,9.7) {\tiny $\iddots$};
                
                \node at (10.5,10.5) {\tiny $n$};
                
\draw (0,0) -- (3,0) -- (3,3) -- (0,3) -- (0,0);
\draw (2,2) -- (6,2) -- (6,6) -- (2,6) -- (2,2);
\draw (5,5) -- (9,5) -- (9,9) -- (5,9) -- (5,5);
\draw (8,8) -- (11,8) -- (11,11) -- (8,11) -- (8,8);

\node at (5.5,-5) {$M$};

\end{tikzpicture}
\begin{tikzpicture} [xscale=0.35,yscale=0.35]
			%\draw [help lines] (0,0) grid (30, 15);
			%Spine
                \node at (-2.5,-2.5) {\tiny $-2$};

                \node at (1.5,0.5) {\tiny $a_1-2$};
                
                                \node at (5.5,4.5) {\tiny $a_2-1$};

                \node at (8.7,8.5) {\tiny $a_3$};

                \node at (10.5,10.5) {\tiny $n$};
                
                               \node at (-1,-1) {\tiny $\iddots$};
                             \node at (2.5,2.5) {\tiny $\iddots$};
                              \node at (6.5,6.5) {\tiny $\iddots$};
                             \node at (9.5,9.5) {\tiny $\iddots$};
                
\draw (-3,-3) -- (0,-3) -- (0,0) -- (-3,0) -- (-3,-3);
\draw (0,0) -- (4,0) -- (4,4) -- (0,4) -- (0,0);
\draw (4,4) -- (8,4) -- (8,8) -- (4,8) -- (4,4);
\draw (8,8) -- (11,8) -- (11,11) -- (8,11) -- (8,8);

\node at (5.5,-5) {$M'$};
\end{tikzpicture}
\end{figure}
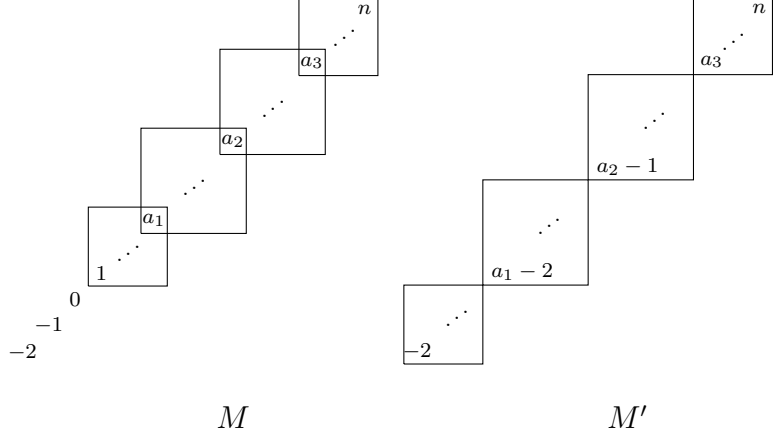

\begin{defn}
Define the class $M(a_1,\ldots,a_{r-1};n)\in H^{2(r-1)(n-1)}(\Gr(r,n+r-1))$ to be the class associated to the Mondrian tableau $M$, where we have added the basis elements $e_{-(r-2)},\ldots,e_0$.

Equivalently, the class $M(a_1,\ldots,a_{r-1};n)$ is equal to the pushforward of $\sigma_{\lambda}\sigma_{\widetilde{\lambda}}$ from $\Gr(r,n)$ to $\Gr(r,n+r-1)$ under the natural inclusion $\langle e_{1},\ldots,e_n\rangle\to \langle e_{-(r-2)},\ldots,e_n\rangle$.
\end{defn}

As the pushforward from $\Gr(r,n)$ to $\Gr(r,n+r-1)$ is injective, the classes $M(a_1,\ldots,a_{r-1};n)$ determine the classes $\sigma_{\lambda}\sigma_{\widetilde{\lambda}}$, simply by replacing the terms $\sigma_\mu$ appearing in the expansion of $M(a_1,\ldots,a_{r-1};n)$ in terms of the Schubert basis with $\sigma_{\mu-(r-1)^r}$.

As we will see, we will not need to deal explicitly with $M'$, but rather, we will understand $M$ inductively, shifting the square $M_1$ to the southwest so that it becomes disjoint from the union of the remaining squares. The squares $M_2,\ldots,M_r$ themselves form a Mondrian tableau of the same form as $M$, but with one fewer square in total, therefore corresponding to conditions on a $(r-1)$-dimensional subspace. The key relation is the following:

\begin{prop}\label{one_step}
Fix integers $a_1,\ldots,a_{r-1}$ with $1<a_1<\cdots<a_{r-1}<n$. Then, we have the following equality on $H^{2(r-1)(n-1)}(\Gr(r,n+r-1))$:
\begin{align*}
&\sigma_{n-a_1}\cdot [\sigma_{(a_1-1)^{r-1}}\cdot M(a_2-a_1+1,\ldots,a_{r-1}-a_1+1;n-a_1+1)]\\
=\quad &M(a_1,\ldots,a_{r-1};n)+\sigma_{n-1}\cdot M(a_2,\ldots,a_{r-1};n).
\end{align*}
\end{prop}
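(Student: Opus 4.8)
The plan is to prove the identity by realizing it as a single degeneration step of Coskun's algorithm, in which the leftmost square $M_1$ is pulled to the southwest until it becomes disjoint from the union $M_2\cup\cdots\cup M_r$. Throughout I work on $\Gr(r,n+r-1)$ with the added basis elements $e_{-(r-2)},\ldots,e_0$, and I regard the condition $v_1\in M_1=\langle e_1,\ldots,e_{a_1}\rangle$ as cut out by a flag that I am free to move. Sliding this flag into the added southwest room produces a one-parameter rational degeneration of the cycle $Z$; since rational equivalence preserves cohomology classes, the class is constant along the family, and the task reduces to identifying the generic fiber and the components of the special (collided) fiber, using \cite[Theorem 3.21]{coskun}.

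In the generic, fully separated position, the square $M_1$ is disjoint from $M_2,\ldots,M_r$, so by the independence of disjoint squares recorded in the discussion of $M'$ the two groups of conditions are imposed independently. The separated square $M_1$, of size $a_1$ inside an ambient space of dimension $n+r-1$, contributes the special Pieri factor $\sigma_{n-a_1}$, since $(n+r-1)-r+1-a_1=n-a_1$. The remaining squares $M_2,\ldots,M_r$ all lie in $\langle e_{a_1},\ldots,e_n\rangle$; after relabeling this space's basis as $e'_1,\ldots,e'_{n-a_1+1}$, so that $a_j\mapsto a_j-a_1+1$ and $n\mapsto n-a_1+1$, they form a Mondrian tableau of exactly the same shape but for an $(r-1)$-dimensional subspace, namely $M(a_2-a_1+1,\ldots,a_{r-1}-a_1+1;n-a_1+1)$. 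Thus the generic fiber is identified with the left-hand side, modulo the passage from an $(r-1)$-subspace condition in the smaller space to an $r$-subspace condition on $\Gr(r,n+r-1)$.

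The special position is governed by Coskun's rule: when $M_1$ meets $M_2,\ldots,M_r$ only along the shared index $\{a_1\}$, the flat limit of the separated cycle acquires two components. One is the honest tableau $M(a_1,\ldots,a_{r-1};n)$ itself; the other is the excess component in which $v_1$ is forced onto a fixed line, contributing the top special class $\sigma_{n-1}$ (here $(n+r-1)-r+1-(n-1)=1$), while $M_2,\ldots,M_r$ continue to constrain the complementary $(r-1)$-plane, giving $M(a_2,\ldots,a_{r-1};n)$. Equating the constant class of the family with the sum of its special-fiber components then yields precisely the asserted relation.

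The step I expect to be the main obstacle is the last identification in the second paragraph: showing that promoting the $(r-1)$-subspace class $M(a_2-a_1+1,\ldots;n-a_1+1)$ to the corresponding $r$-subspace class on $\Gr(r,n+r-1)$ is effected exactly by multiplication by the rectangular class $\sigma_{(a_1-1)^{r-1}}$. This is a compatibility statement between the recursive structure of the Mondrian tableaux and the restriction/pushforward along the inclusion of Grassmannians determined by the $a_1-1$ basis vectors $e_1,\ldots,e_{a_1-1}$ lying below $\langle e_{a_1},\ldots,e_n\rangle$; the rectangle $(a_1-1)^{r-1}$ should record that each of the $r-1$ separated squares may be slid freely across these $a_1-1$ coordinates. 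Verifying this requires tracking the Schubert conditions carefully, though the fact that both sides have total codimension $(r-1)(n-1)$ provides a useful consistency check.
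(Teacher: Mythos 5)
Your degeneration strategy coincides with the paper's: shift $M_1$ one unit to the southwest so that it becomes disjoint from $M_2,\ldots,M_r$ (the paper's tableau $M_\circ$, with cycle $Z_\circ$), identify $[Z_\circ]$ with the left-hand side, and invoke Coskun's rule (the relevant statement is \cite[Theorem 3.32]{coskun}, not Theorem 3.21) to split the flat limit as $[Z_\circ]=[Z]+[Z_+]$ with both components of multiplicity one. However, the step you defer in your final paragraph is not a routine verification to be postponed: it is the heart of the proof, and your proposal does not supply it. The paper establishes it in two stages: (a) the locus $Z'\subset\Gr(r-1,n+r-1)$ of hyperplanes $V'=\langle v_2,\ldots,v_r\rangle$ cut out by $M_2,\ldots,M_r$ has class $\sigma_{(a_1)^{r-1}}\cdot M(a_2-a_1+1,\ldots,a_{r-1}-a_1+1;n-a_1+1)$, where the rectangle $(a_1)^{r-1}$ (note: not $(a_1-1)^{r-1}$) arises from the pushforward along the inclusion $\Gr(r-1,n+r-1-a_1)\hookrightarrow\Gr(r-1,n+r-1)$, which adds $a_1$ to every part; and (b) passing from hyperplanes $V'$ to the $r$-planes containing them is the correspondence $(\pr_V)_*\pr_{V'}^*$ through the flag variety $\Fl(r-1,r,n+r-1)$, which sends $\sigma_\mu$ to $\theta(\sigma_{\mu-1^{r-1}})$, subtracting $1$ from every part. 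The rectangle $(a_1-1)^{r-1}$ in the statement is exactly the composite of these two shifts; your heuristic of ``sliding each square across $a_1-1$ coordinates'' neither produces this factorization nor computes the correspondence, so the left-hand side of the identity remains unidentified in your argument.

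There is also a genuine error, not merely an omission, in your description of the excess component. Coskun's rule does not keep $M_2,\ldots,M_r$ unchanged while pinning $v_1$ to the line $\langle e_{a_1}\rangle$: it replaces the pair $M_1,M_2$ by their \emph{new intersection} $\langle e_{a_1}\rangle$ and their \emph{old union} $\langle e_0,\ldots,e_{a_2}\rangle$. The enlargement of $M_2$ is essential. The locus you literally describe, with the unenlarged $M_2=\langle e_{a_1},\ldots,e_{a_2}\rangle$, has codimension $(r-1)(n-1)+a_1$ (after quotienting by $e_{a_1}$, the first condition involves a subspace of dimension $a_2-a_1$ rather than $a_2$), so it cannot be a component of the flat limit at all, and its class is not $\sigma_{n-1}\cdot M(a_2,\ldots,a_{r-1};n)$; your claimed identification is internally inconsistent with your description. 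Geometrically, the old union records that the limit of the pencil $\langle v_1,v_2\rangle$ remains inside $\langle e_0,\ldots,e_{a_2}\rangle$, as explained in Remark \ref{subvarieties_break_rmk}. With the enlarged square, the paper computes $[Z_+]$ by deleting $e_{a_1}$ to obtain a tableau of class $M(a_2,\ldots,a_{r-1};n)$ on $\Gr(r-1,\bC^{n+r-1}/\langle e_{a_1}\rangle)$, then pushing forward along $V''\mapsto\langle V'',e_{a_1}\rangle$, under which $\sigma_\mu\mapsto\sigma_{(n-1,\mu)}$; the identification works precisely because the enlarged square descends to a subspace of dimension $a_2$ in the quotient. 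In short, both formulas you assert are correct, but the two mechanisms that prove them---the flag correspondence and the old-union replacement---are missing from your proposal, and they constitute essentially the entire content of the paper's proof.
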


We explain how the terms in the equality are to be interpreted. Write 
\begin{equation*}
\theta:H^{*}(\Gr(r-1,n-r-2))\to H^{*}(\Gr(r,n-r-1))
\end{equation*}
for the homomorphism map of abelian groups defined by $\theta(\sigma_\mu)=\sigma_{(\mu,0)}$, where we have simply appended a part of size 0 to the end of $\mu$.
\begin{itemize}
\item The class $M(a_2-a_1+1,\ldots,a_{r-1}-a_1+1;n-a_1+1)$ is naturally a class of degree $2(r-2)(n-a_1)$ on $\Gr(r-1,n+r-1-a_1)$, therefore a linear combination of Schubert cycles $\sigma_\mu$ with $\mu\subset(n-a_1)^{r-1}$ and $|\mu|=(r-2)(n-a_1)$. The operation ``$\sigma_{(a_1-1)^{r-1}}\cdot-$'' pushes this class forward to $\Gr(r-1,n+r-2)$ by adding $(a_1-1)$ to each part of $\mu$. This class is then mapped to $\Gr(r,n+r-1)$ via the homomorphism $\theta$ (which we have suppressed in the formula of Proposition \ref{one_step}). Finally, the resulting class on $\Gr(r,n+r-1)$ is multiplied by $\sigma_{n-a_1}$.

\item The class $M(a_2,\ldots,a_{r-1};n)$ is naturally a class of degree $2(r-2)(n-1)$ on $\Gr(r-1,n+r-2)$. This class is mapped to a class on $\Gr(r,n+r-1)$ under $\theta$, and then multiplied with the class $\sigma_{n-1}$. Alternatively, one can combine these two steps simply by replacing each term $\sigma_{\mu}$ appearing in the expansion of $M(a_2,\ldots,a_{r-1};n)$ with the partition $\sigma_{(n-1,\mu)}$, obtained by appending a part of maximal length $n-1$ to $\mu$.
\end{itemize}

%Note that we have also added $r-1$ additional basis elements $e_1,\ldots,e_{r-1}$; this corresponds to pushing forward the class $\sigma_{\lambda}\sigma_{\widetilde{\lambda}}$ by the map $i:\Gr(r,n)\to\Gr(r,n+r-1)$ induced the inclusion $\bC^n\subset\bC^{n+r-1}$. Because the pushforward is injective on cohomology, it suffices to understand the class $i_{*}(\sigma_{\lambda}\sigma_{\widetilde{\lambda}})$; the class $\sigma_{\lambda}\sigma_{\widetilde{\lambda}}$ is obtained by dividing formally by $\sigma_{(r-1)^r}$, or equivalently removing the first $r-1$ columns from every class occurring in $i_{*}(\sigma_{\lambda}\sigma_{\widetilde{\lambda}})$ when expressed in the basis of Schubert cycles.

\begin{proof}
Let $M_{\circ}$ be the Mondrian tableau obtained from $M$ by shifting the square $M_1$ to the southwest by one unit. Geometrically, the condition that $V$ contain a basis element $v_1$ in $\langle e_1,\ldots,e_{a_1}\rangle$ is replaced by the condition that $v_1\in\langle e_0,\ldots,e_{a_1-1}\rangle$; the conditions on the other $r-1$ basis elements remain the same. Let $Z_\circ\subset \Gr(r,n+r-1)$ be the corresponding closed subvariety, that is, the closure of the locus of $V=\langle v_1,\ldots,v_r\rangle$ with $v_1\in\langle e_0,\ldots,e_{a_1-1}\rangle$ and $v_j\in\langle e_{a_j},\ldots,e_{a_{j+1}}\rangle$ for $j\ge2$.

Let $M_+$ be the Mondrian tableau obtained from $M$ by replacing the square $M_1$ with the square of width 1 containing $e_{a_1}$, and $M_2$ with the square containing $e_0,\ldots,e_{a_2}$. In particular, $M_1$ is contained in $M_2$. Geometrically, the condition that $V$ contain two distinct basis elements $v_1\in\langle e_1,\ldots,e_{a_1}\rangle$ and $v_2\in\langle e_{a_1+1},\ldots,e_{a_2}\rangle$ is replaced by the condition that $v_1$ be a non-zero multiple of $e_{a_1}$, and $v_2\in \langle e_0,\ldots,e_{a_2}\rangle$. (The other $r-2$ conditions stay the same.) Let $Z_+\subset \Gr(r,n+r-1))$ be the closed subvariety defined by these new conditions.
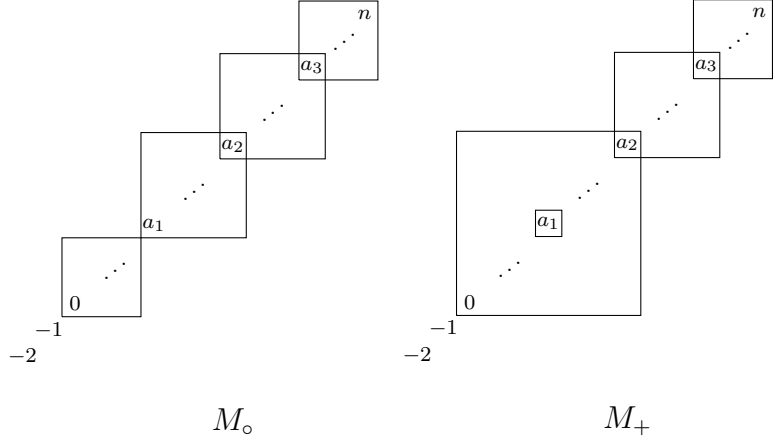
\begin{figure}
\caption{The Mondrian tableaux $M_\circ,M_+$ when $r=4$. After one first step of the geometric Littlewood-Richardson rule is applied, to $M_\circ$, one obtains $M_+$ and $M$ (Figure \ref{mondrian2})}\label{mondrian3}
\begin{tikzpicture} [xscale=0.35,yscale=0.35]
			%\draw [help lines] (0,0) grid (30, 15);
			%Spine
			
			\node at (-2.5,-2.5) {\tiny $-2$};		
		\node at (-1.5,-1.5) {\tiny $-1$};		
	                 \node at (-0.5,-0.5) {\tiny $0$};

               \node at (1,1) {\tiny $\iddots$};
               
                \node at (2.5,2.5) {\tiny $a_1$};
                
		 \node at (4,4) {\tiny $\iddots$};
                
                \node at (5.5,5.5) {\tiny $a_2$};
                
                		 \node at (7,7) {\tiny $\iddots$};
                
                \node at (8.5,8.5) {\tiny $a_3$};
                
          \node at (9.7,9.7) {\tiny $\iddots$};
                
                \node at (10.5,10.5) {\tiny $n$};
                
\draw (-1,-1) -- (2,-1) -- (2,2) -- (-1,2) -- (-1,-1);
\draw (2,2) -- (6,2) -- (6,6) -- (2,6) -- (2,2);
\draw (5,5) -- (9,5) -- (9,9) -- (5,9) -- (5,5);
\draw (8,8) -- (11,8) -- (11,11) -- (8,11) -- (8,8);

\node at (5.5,-5) {$M_\circ$};

\end{tikzpicture}
\begin{tikzpicture} [xscale=0.35,yscale=0.35]
			%\draw [help lines] (0,0) grid (30, 15);
			%Spine
			
			\node at (-2.5,-2.5) {\tiny $-2$};		
		\node at (-1.5,-1.5) {\tiny $-1$};		
	                 \node at (-0.5,-0.5) {\tiny $0$};

               \node at (1,1) {\tiny $\iddots$};
               
                \node at (2.5,2.5) {\tiny $a_1$};
                
		 \node at (4,4) {\tiny $\iddots$};
                
                \node at (5.5,5.5) {\tiny $a_2$};
                
                		 \node at (7,7) {\tiny $\iddots$};
                
                \node at (8.5,8.5) {\tiny $a_3$};
                
          \node at (9.7,9.7) {\tiny $\iddots$};
                
                \node at (10.5,10.5) {\tiny $n$};
                
\draw (-1,-1) -- (6,-1) -- (6,6) -- (-1,6) -- (-1,-1);
\draw (2,2) -- (3,2) -- (3,3) -- (2,3) -- (2,2);
\draw (5,5) -- (9,5) -- (9,9) -- (5,9) -- (5,5);
\draw (8,8) -- (11,8) -- (11,11) -- (8,11) -- (8,8);

\node at (5.5,-5) {$M_+$};

\end{tikzpicture}
\end{figure}

The Mondrian tableaux $M_\circ,M_+$ are depicted in Figure \ref{mondrian3}. Their combinatorial significance is the following: if one starts with $M_\circ$, the geometric Littlewood-Richardson Rule \cite[Algorithm 3.12, step GA2]{coskun} proceeds by shifting $M_1$ (the ``active square'') one unit to the northeast, to obtain the Mondrian tableau $M$. However, one also records the data of the Mondrian tableau $M_+$ which is obtained by replacing $M_1$ and its neighbor square $M_2$ with their \emph{new} intersection and \emph{old} union. We obtain as a consequence \cite[Theorem 3.32]{coskun} that
\begin{equation}\label{subvarieties_break}
[Z_\circ]=[Z]+[Z_+]=M(a_1,\ldots,a_{r-1};n)+[Z_+],
\end{equation}
where we use square brackets $[-]$ to denote the cohomology classes corresponding to subvarieties. The geometric content of \eqref{subvarieties_break} is explained in Remark \ref{subvarieties_break_rmk} at the end of this section.

%We explain the geometric content of this statement, which is elementary. The claim amounts to studying the degeneration of the subvariety $Z_\circ\subset \Gr(r,n+r-1)$ under the degeneration of the basis $e_{-(r-2)},\ldots,e_n$ that replaces $e_0$ with $e_0^t=te_0+(1-t)e_{a_1}$ at time $t$. When $t\neq0$, one has a closed subvariety $Z^t_\circ\subset \Gr(r,n+r-1)$ defined as above with respect to the basis $e_{-(r-2)},\ldots,e_0^t,\ldots,e_n$. When $t=0$, these vectors are no longer linearly independent, but we may still study the consider limit $Z^{\lim}_\circ:=\lim_{t\to 0}Z^t_\circ$.
%
%In the limit, one of two things may happen generically. The first possibility is that the limits of the vectors $v_1,\ldots,v_r$ remain linearly independent, and we simply have the new condition that $v_1$ lie in the new subspace $\langle e_1,\ldots,e_{a_1}\rangle$, equal to the limit of the subspace $\langle e_0,\ldots,e_{a_1-1}\rangle$ as $t\to 0$. In this way, $Z$ appears as a component of the limit of $Z^{\lim}_\circ$. Second, it may happen that the vectors $v_1$ and $v_2$ become linearly dependent, necessarily both constant multiples of the basis vector $e_{a_1}$, which spans the intersection $\langle e_1,\ldots,e_{a_1}\rangle\cap \langle e_{a_1},\ldots,e_{a_2}\rangle$. If this is the case, then the limit of the span $\langle v_1,v_2\rangle$ must also continue lie in $\langle e_0,\ldots,e_{a_2}\rangle$, as this holds for every $t\neq0$. This exhibits $Z_+$ as the other component of the limit of $Z^{\lim}_\circ$. Therefore, we conclude that $[Z_\circ]=[Z^{\lim}_\circ]=[Z]+[Z_+]$.

It now suffices to identify the classes $[Z_\circ]$ and $[Z_+]$. First, consider $[Z_\circ]$. The conditions corresponding to $M_1$ and $M_2,\ldots,M_r$ are defined with respect to pairwise disjoint basis elements, so are generically transverse on $\Gr(r,n+r-1)$. The condition corresponding to $M_1$ is that $V$ intersect a fixed subspace of codimension $(n+r-1)-a_1$; the corresponding Schubert variety has class $\sigma_{n-a_1}$.

On the other hand, the condition corresponding to the remaining $(r-1)$ squares is that $V$ contain a hyperplane $V'$ satisfying the same conditions as in the definition of the class $M(a_2-a_1+1,\ldots,a_{r-1}-a_1+1;n-a_1+1)$, except that there are $a_1$ extra basis vectors $e_0,\ldots,e_{a_1-1}$, and the indices of the basis vectors $e_{a_1},\ldots,e_n$ are shifted by $a_1-1$. The class of the subvariety $Z'\subset \Gr(r-1,n+r-1)$ consisting generically of $V'=\langle v_2,\ldots,v_r\rangle$ with $v_j\in \langle e_{a_{j-1}},\ldots,a_{a_{j}}\rangle$ is therefore equal to $\sigma_{(a_1)^{r-1}}\cdot M(a_2-a_1+1,\ldots,a_{r-1}-a_1+1;n-a_1+1)$. Explicitly, if we we write the class $M(a_2-a_1+1,\ldots,a_{r-1}-a_1+1;n-a_1+1)$ as
\begin{equation*}
\sum_{\substack{\mu\subset(n-a_1)^{r-1} \\ |\mu|=(r-2)(n-a_1)}} b_\mu\sigma_\mu\in H^{2(r-2)(n-a_1)}(\Gr(r-1,n+r-1-a_1)),
\end{equation*}
then 
\begin{equation*}
[Z']=\sum_{\substack{\mu\subset(n-a_1)^{r-1} \\ |\mu|=(r-2)(n-a_1)}} b_\mu\sigma_{\mu+a_1^{r-1}}\in H^{2[(r-2)(n-a_1)+a_1(r-1)]}(\Gr(r-1,n+r-1)),
\end{equation*}
where the partition $\mu+a_1^{r-1}$ is obtained from $\mu$ by adding $a_1$ full columns, say, to the left.

The resulting locus on $\Gr(r,n+r+1)$ may be understood as follows. Let $\Fl(r-1,r,n+r-1)$ be the partial flag variety parametrizing nested pairs of subspaces $V'\subset V\subset \bC^{n+r-1}$ of dimensions $r-1,r$, respectively. We have a pair of maps
\begin{equation*}
\xymatrix{ & \Fl(r-1,r,n+r-1) \ar[ld]_{\pr_{V'}} \ar[rd]^{\pr_{V}} & \\
\Gr(r-1,n+r-1) & & \Gr(r,n+r-1)}.
\end{equation*}
remembering the two subspaces individually. Then, the subvariety of $\Gr(r,n+r-1)$ cut out by the conditions imposed by $M_2,\ldots,M_r$ is precisely $\pr_V(\pr_{V'}^{-1}Z')$. An elementary calculation (for example, using the definition given in \S\ref{schubert_sec}) shows that, on the level of cohomology, the correspondence $(\pr_V)_{*}\pr_{V'}^{*}$ sends the Schubert class $\sigma_\mu$ to $\theta(\sigma_{\mu-1^{r-1}})$, which is interpreted to be zero when $1^{r-1}\not\subset\mu$. Here, we recall that $\theta$ is the homomorphism of abelian groups defined immediately after the statement of the proposition, given by $\theta(\sigma_\mu)=\sigma_{(\mu,0)}$.

Therefore, we have
\begin{align*}
(\pr_V)_{*}\pr_{V'}^{*}([Z'])&=\sum_{\substack{\mu\subset(n-a_1)^{r-1} \\ |\mu|=(r-2)(n-a_1)}} b_\mu\theta(\sigma_{\mu+(a_1-1)^{r-1}})\\
&=\theta(\sigma_{(a_1-1)^{r-1}}\cdot M(a_2-a_1+1,\ldots,a_{r-1}-a_1+1;n-a_1+1))
\end{align*}
in $H^{2[(r-1)(n-1)-(n-a_1)]}(\Gr(r,n+r-1))$. Therefore, we conclude that
\begin{equation*}
[Z_\circ]=\sigma_{n-a_1}\cdot [\sigma_{(a_1-1)^{r-1}}\cdot M(a_2-a_1+1,\ldots,a_{r-1}-a_1+1;n-a_1+1)],
\end{equation*}
where, as in the statement of the proposition, we suppress the map $\theta$ inside the square brackets.

Similarly, we claim that
\begin{equation*}
[Z_+]=\sigma_{n-1}\cdot M(a_2,\ldots,a_{r-1};n).
\end{equation*}
Deleting the basis element $e_{a_1}$ and the corresponding $1\times1$ box $M_1$ from $M_+$ yields a Mondrian tableau which describes a subvariety $Z''\subset\Gr(r-1,\bC^{n+r-1}/\langle e_{a_1}\rangle)=\Gr(r-1,n+r-2)$ of class $M(a_2,\ldots,a_{r-1};n)$. Then, taking the condition imposed by $M_1$ back into account, the subvariety $Z_+\subset\Gr(r,\bC^{n+r-1})$ is the image of $Z''$ under the map
\begin{equation*}
\Gr(r-1,\bC^{n+r-1}/\langle e_{a_1}\rangle)\to\Gr(r,\bC^{n+r-1})
\end{equation*}
which sends $V''\subset \bC^{n+r-1}/\langle e_{a_1}\rangle$ to $\langle V'',e_{a_1}\rangle\subset \bC^{n+r-1}$. The induced push-forward map on cohomology sends the class $\sigma_{\mu}$ to $\sigma_{(n-1,\mu)}$, as seen by comparing the Schubert conditions imposed by the flag 
\begin{equation*}
0= F''_0\subset F''_1\subset\cdots\subset F''_{n+r-2}=\bC^{n+r-1}/\langle e_{a_1}\rangle
\end{equation*}
to those imposed by the flag 
\begin{equation*}
0\subset \langle F''_0,e_{a_1}\rangle\subset \langle F''_1,e_{a_1}\rangle\subset\cdots\subset \langle F''_{n+r-2},e_{a_1}\rangle=\bC^{n+r-1},
\end{equation*}
as in the definition in \S\ref{schubert_sec}. We conclude the claimed formula for $[Z_+]$, and combining with our formula for $[Z_\circ]$ completes the proof.
\end{proof}

Note that the basis vectors $e_{-(r-2)},\ldots,e_{-1}$ have not yet an played important role. However, we will apply Proposition \ref{one_step} inductively, $(r-1)$ times. On the $j$-th step, implicitly, the squares $M_1,\ldots,M_j$ are shifted one unit to the southwest, which requires one additional basis vector $e_{1-j}$. (As we have already alluded to, we will not need to perform this combinatorial operation explicitly; Proposition \ref{one_step} will be enough to capture this operation formally.) Therefore, $\Gr(r,n+r-1)$ is the most natural place to state the following formula.

\begin{cor}\label{explicit_M}
Fix integers $a_1,\ldots,a_{r-1}$ with $1<a_1<\cdots<a_{r-1}<n$ as before. Write also $a_0=1$ and $a_r=n$. Then, we have
\begin{equation*}
M(a_1,\ldots,a_{r-1};n)=\sum_{0=s_0<s_1<\cdots< s_\ell =r}(-1)^{r-\ell}\prod_{j=1}^{\ell}\sigma_{(n-1)^{s_j-s_{j-1}-1},n-1-(a_{s_j}-a_{s_{j-1}})}
\end{equation*}
on $\Gr(r,n+r-1)$. Here, the sum is over all (ordered) subsets $\{s_1,\ldots,s_{\ell-1}\}\subset\{1,2,\ldots,r-1\}$ of any size.
\end{cor}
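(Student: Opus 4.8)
The plan is to prove Corollary \ref{explicit_M} by induction on $r$, taking Proposition \ref{one_step} as the engine. That proposition rewrites $M(a_1,\ldots,a_{r-1};n)$ in terms of the two classes $M(a_2-a_1+1,\ldots,a_{r-1}-a_1+1;n-a_1+1)$ and $M(a_2,\ldots,a_{r-1};n)$, each carrying one fewer index, so the recursion terminates at the base case $r=1$, where $M(\,;n)=1$ agrees with the single chain $0<1$ contributing $\sigma_{n-1-(a_1-a_0)}=\sigma_0$. Denoting the proposed right-hand side by $\Phi_r(a_1,\ldots,a_{r-1};n)$, it therefore suffices to check that $\Phi$ obeys the \emph{same} recursion; after this reduction no further geometry is needed, and everything becomes a Pieri-rule bookkeeping problem.

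To verify the recursion for $\Phi_r$, I would split its sum over chains $0=s_0<s_1<\cdots<s_\ell=r$ according to the value of $s_1$, equivalently according to whether $1$ belongs to the indexing subset $\{s_1,\ldots,s_{\ell-1}\}$. The leading factor $\sigma_{(n-1)^{s_1-1},\,n-1-(a_{s_1}-a_0)}$ encodes the size of the first block, and its $s_1-1$ maximal parts $n-1$ are exactly what the repeated ``prepend a maximal part'' operation $\sigma_\mu\mapsto\sigma_{(n-1,\mu)}$ coming from the term $\sigma_{n-1}\cdot M(a_2,\ldots,a_{r-1};n)$ should create. Thus I expect the chains with $s_1\ge 2$ to assemble, via the reindexing $s_i\mapsto s_i-1$ and the substitution $b_i=a_{i+1}$, into $-\sigma_{n-1}\cdot\Phi_{r-1}(a_2,\ldots,a_{r-1};n)$ (here the number of blocks is unchanged and the required sign is supplied by the explicit minus sign); and the chains with $s_1=1$, for which the leading factor is $\sigma_{n-a_1}$, to assemble into $\sigma_{n-a_1}\cdot\theta[\sigma_{(a_1-1)^{r-1}}\cdot\Phi_{r-1}(a_2-a_1+1,\ldots;n-a_1+1)]$ after the reindexing $t_{j-1}=s_j-1$ and $a'_i=a_{i+1}-a_1+1$ (which splits off the size-one first block, lowering the number of blocks by one and matching signs). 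I verified that this split is exactly correct in the case $r=3$, where the two groups of chains reproduce the two terms of Proposition \ref{one_step} on the nose, after nontrivial cancellations inside each group.

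The main obstacle is the $s_1=1$ group. Once $\sigma_{n-a_1}$ is pulled out and the indices are relabelled, the remaining claim is that raising the maximal part of every factor of $\Phi_{r-1}(a_2-a_1+1,\ldots;n-a_1+1)$ from $n-a_1$ to $n-1$, i.e.\ adding $a_1-1$ to every part of every factor, is accomplished by the single rectangular twist $\sigma_{(a_1-1)^{r-1}}\cdot(-)$, which a priori only adds $a_1-1$ to each part of the \emph{expanded} class. The crux is therefore a distribution lemma: for Schubert classes $\sigma_{\nu^{(1)}},\ldots,\sigma_{\nu^{(\ell)}}$ on $\Gr(r-1,\cdot)$ with maximal leading parts and with total part-count $r-1$, one has $\sigma_{(a_1-1)^{r-1}}\cdot\prod_j\sigma_{\nu^{(j)}}=\prod_j\sigma_{\nu^{(j)}+(a_1-1)^{k_j}}$, where $k_j$ is the number of parts of $\nu^{(j)}$. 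I would prove this by induction on $\ell$ using Pieri: the naive distribution would produce spurious terms whose first part exceeds the ambient maximal part $n-1$ on $\Gr(r-1,n+r-2)$, and precisely the maximal-leading-part hypothesis, together with the exact dimension of the Grassmannian, forces these boundary terms to vanish. I have checked the lemma in the first genuinely multi-row cases (for instance two two-rowed factors on $\Gr(4,\cdot)$), where the surviving terms match exactly. Granting it, the two groups of chains reconstitute $\Phi_r$ from Proposition \ref{one_step}, closing the induction.
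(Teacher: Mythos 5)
Your overall plan is exactly the paper's proof: induction on $r$ with Proposition \ref{one_step} as the recursion, the base case $r=1$, and a split of the chains of the claimed formula according to $s_1=1$ versus $s_1\ge 2$, with the same sign bookkeeping ($r$ and $\ell$ both drop for $s_1=1$; only $r$ drops for $s_1\ge2$, the explicit minus sign compensating). The paper asserts the two ``absorption'' steps without proof, so you are right that this is where the real content sits, and your identification of the $s_1=1$ group as the crux is accurate. (For the $s_1\ge 2$ group no extra hypothesis is in fact needed: ``prepend $n-1$'' is the pushforward $i_*$ along $V'\mapsto V'\oplus\langle e_{a_1}\rangle$; since $i^*$ fixes every Schubert class with at most $r-1$ parts, the projection formula gives $i_*\bigl(\prod_j\sigma_{\nu^{(j)}}\bigr)=\sigma_{n-1}\cdot\prod_j\sigma_{\nu^{(j)}}$, and $\sigma_{n-1}\cdot\sigma_{\nu^{(1)}}=\sigma_{(n-1,\nu^{(1)})}$ by the Pieri rule because $n-1$ is the maximal part.)

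However, your distribution lemma is false as stated: ``maximal leading parts and total part-count $r-1$'' is not a sufficient hypothesis. Take $r=3$, $n=5$, $a_1=2$, and the two single-row factors $\sigma_1,\sigma_1$ (total part count $2=r-1$; the leading-part condition is vacuous for single-row factors, and single-row factors with non-maximal part do occur in the application, e.g.\ the factor $\sigma_{13}$ in the paper's running example). On $\Gr(2,6)$ one has $\sigma_{1,1}\cdot\sigma_1\cdot\sigma_1=\sigma_{3,1}+\sigma_{2,2}$, while $\sigma_2\cdot\sigma_2=\sigma_4+\sigma_{3,1}+\sigma_{2,2}$: the spurious term $\sigma_4$ does \emph{not} overflow the ambient rectangle and survives, so the two sides differ. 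What rescues the inductive step is a hypothesis your statement omits: every summand of $\Phi_{r-1}(a_2-a_1+1,\ldots;n-a_1+1)$ has total degree exactly $(r-2)(n-a_1)$, because $\sum_j(a'_{t_j}-a'_{t_{j-1}})$ telescopes to $n'-1=n-a_1$; equivalently, the single-row parts $c_j$ of the factors satisfy $\sum_j c_j=(m-1)(n-a_1)$. In the target degree $(r-2)(n-a_1)+(r-1)(a_1-1)$, the complement of any $\mu\subset(n-1)^{r-1}$ has exactly $n-a_1$ boxes, so every Schubert class occurring on \emph{either} side automatically contains the rectangle $(a_1-1)^{r-1}$; no spurious terms can exist in that degree, and the coefficients then match (for instance via the bijection that deletes the first $a_1-1$ columns from the Pieri tableaux). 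Your numerical checks passed because factors extracted from genuine formulas $\Phi_{r-1}$ satisfy this hidden degree constraint automatically, but the lemma you plan to prove by Pieri induction would collapse on inputs like the one above. So the plan is correct and coincides with the paper's, but the key lemma must be restated with the degree condition (and its proof run under that hypothesis) before the induction closes.
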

Note that the right hand side may also be written as
\begin{equation*}
\sum_{0=s_0<s_1<\cdots< s_\ell =r}(-\sigma_{n-1})^{r-\ell}\prod_{j=1}^{\ell}\sigma_{n-1-(a_{s_j}-a_{s_{j-1}})}.
\end{equation*}
\begin{proof}
We proceed by induction on $r$. When $r=1$, we obtain simply the statement $M(-;n)=1$, which amounts to the fact that the condition that a single vector lie in all of $\bC^n$ is trivial.

For the inductive step, we verify that the claimed formula satisfies Proposition \ref{one_step}. Applying the inductive hypothesis, the term $\sigma_{n-a_1}\cdot [\sigma_{(a_1-1)^{r-1}}\cdot M(a_2-a_1+1,\ldots,a_{r-1}-a_1+1;n-a_1+1)]$ contains all of the terms in the claimed formula for $M(a_1,\ldots,a_{r-1};n)$ where $s_1=1$. Here, the term $(-1)^{r-\ell}$ does not change, because $r,\ell$ both change by 1. On the other hand, the term $\sigma_{n-1}\cdot M(a_2,\ldots,a_{r-1};n)$ contains all of the remaining terms, where $s_1>1$, but with opposite signs ($r$ differs by 1, but $\ell$ does not change). Indeed, the extra factor of $\sigma_{n-1}$ is absorbed into the $j=1$ factor in each summand. This completes the proof.
\end{proof}

%Coskun's Littlewood-Richardson Rule performs a sequence of modifications to $M$ until it is a formal sum of \emph{nested} Mondrian tableau correpsonding to Schubert cycles. Rather than carrying this out, we compare $M$ to the Mondrian tableau $M'$ with \emph{disjoint} squares $M'_j$ containing the basis elements $e_{a_{j-1}-(r-j)},\ldots,e_{a_j-(r-j)}$, depicted in Figure \ref{mondrian3}. The square $M'_j$ is obtained from $M_j$ by shifting by $r-j$ units to the southwest.

\begin{rem}\label{subvarieties_break_rmk}
We explain the geometric content of the formula \eqref{subvarieties_break}. The claim amounts to studying the degeneration of the subvariety $Z_\circ\subset \Gr(r,n+r-1)$ under the degeneration of the basis $e_{-(r-2)},\ldots,e_n$ that replaces $e_0$ with $e_0^t=te_0+(1-t)e_{a_1}$ at time $t$. When $t\neq0$, one has a closed subvariety $Z^t_\circ\subset \Gr(r,n+r-1)$ defined in the same way as $Z_\circ$, now with respect to the basis $e_{-(r-2)},\ldots,e_0^t,\ldots,e_n$. When $t=0$, these vectors are no longer linearly independent, but we may still study the limit $Z^{\lim}_\circ:=\lim_{t\to 0}Z^t_\circ$.

In the limit, the analysis in the proof of \cite[Theorem 3.32]{coskun} shows  that one of two things may happen generically. The first possibility is that the limits of the vectors $v_1,\ldots,v_r$ remain linearly independent, and we simply have the new condition that $v_1$ lie in the new subspace $\langle e_1,\ldots,e_{a_1}\rangle$, equal to the limit of the subspace $\langle e_0,\ldots,e_{a_1-1}\rangle$ as $t\to 0$. In this way, $Z$ appears as a component of the limit of $Z^{\lim}_\circ$. Second, it may happen that the vectors $v_1$ and $v_2$ become linearly dependent, necessarily both constant multiples of the basis vector $e_{a_1}$, which spans the intersection $\langle e_1,\ldots,e_{a_1}\rangle\cap \langle e_{a_1},\ldots,e_{a_2}\rangle$. If this is the case, then the limit of the span $\langle v_1,v_2\rangle$ must also continue to lie in $\langle e_0,\ldots,e_{a_2}\rangle$, as this holds for every $t\neq0$. This exhibits $Z_+$ as the other component of the limit of $Z^{\lim}_\circ$. Both components are shown in \cite{coskun} to appear with multiplicity 1, and one concludes that $[Z_\circ]=[Z^{\lim}_\circ]=[Z]+[Z_+]$.
\end{rem}

\section{1-strip-less tableaux}\label{stripless_sec}

We now consider the individual terms appearing on the right hand side of Corollary \ref{explicit_M}.

\begin{prop}\label{term_stripless}
Consider the expansion of 
\begin{equation*}
\prod_{j=1}^{\ell}\sigma_{(n-1)^{s_j-s_{j-1}-1},n-1-(a_{s_j}-a_{s_{j-1}})}
\end{equation*}
in $H^{*}(\Gr(r,n+r-1))$ as a linear combination of Schubert cycles $\sigma_\mu$, with $\mu\subset(n-1)^{r}$ and $|\mu|=(n-1)(r-1)$.

Then, the coefficient of $\sigma_\mu$ is equal to the number of SSYTs of shape $\mu$ with the properties that:
\begin{itemize}
\item the $i$-weight $w_i(\mu)$ (that is, the number of boxes of $\mu$ filled with the entry $i$) is equal to $n-1-(a_{i}-a_{i-1})$, for $i=1,2,\ldots,r$, and
\item $\mu$ contains an $(i,i+1)$-strip, for all $i\in\{1,2,\ldots,r-1\}-\{s_1,\ldots,s_{\ell-1}\}$.
\end{itemize}
\end{prop}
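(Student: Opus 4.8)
The plan is to expand the product by the Pieri rule and match the result, term by term, against the claimed tableau count. The first step is to simplify each factor: working inside the box $(n-1)^r$ that indexes Schubert classes on $\Gr(r,n+r-1)$, a full-width row can never be extended, so multiplying by $\sigma_{n-1}$ necessarily adds one box to every column (the unique maximal horizontal strip of size $n-1$), and $\sigma_{(n-1)^{k-1}}$ stacks $k-1$ full rows. Hence $\sigma_{(n-1)^{k_j-1},\,n-1-(a_{s_j}-a_{s_{j-1}})}=\sigma_{n-1}^{k_j-1}\sigma_{b_j}$ in $H^*(\Gr(r,n+r-1))$, where $b_j=n-1-(a_{s_j}-a_{s_{j-1}})$, and the whole product becomes a product of $r$ special classes: $r-\ell$ copies of $\sigma_{n-1}$ together with the $\sigma_{b_j}$. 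By the iterated Pieri rule the coefficient of $\sigma_\mu$ is then a Kostka number $K_{\mu,\beta}$ computed inside the box, where $\beta$ records the strip sizes: within block $j$ the internal entries each occur $n-1$ times and the final entry occurs $b_j$ times. The Proposition is thus equivalent to the purely combinatorial identity $K_{\mu,\beta}=\#\{\text{SSYT of shape }\mu,\ \text{content }w,\ \text{with an }(i,i+1)\text{-strip for every non-breakpoint }i\}$, with $w_i=n-1-(a_i-a_{i-1})$ and the non-breakpoints being exactly the within-block consecutive pairs.

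Before constructing the matching I would isolate the combinatorics of a single strip. The key lemma is that an SSYT contains an $(i,i+1)$-strip if and only if every one of the $n-1$ columns contains an $i$ or an $i+1$. Semistandardness forces the cells equal to a fixed value to occupy columns along which their rows weakly decrease, and from this one checks that the threshold required for a strip (all $i$'s to the left of all $i+1$'s, rows weakly ascending to the right) becomes automatic once every column is covered. This reduces each strip condition to a transparent column-coverage condition, which is precisely what fails for the tableaux that should not be counted.

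With this in hand I would prove $K_{\mu,\beta}=\#\{\dots\}$ by constructing a content-changing bijection between SSYT of shape $\mu$ and content $\beta$ and strippy SSYT of shape $\mu$ and content $w$. Since the strip conditions never link entries of different blocks, it suffices to add one block at a time, relative to the shape already built from the smaller entries, and induct on the block size $k$. The anchors are the size-one blocks, where the statement is just the ordinary Pieri rule, and a single block built on the empty shape, where coverage for all consecutive pairs forces a unique maximally-packed tableau of shape $((n-1)^{k-1},b)$, matching $K_{\mu,\beta}=\delta_{\mu,((n-1)^{k-1},b)}$ in the box (dominance plus the box constraint pin $\mu$ down). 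The inductive step redistributes the $n-1$ copies of each internal entry down to the prescribed multiplicity $w_i$, promoting the appropriate rightmost entries so that each column retains an $i$ or an $i+1$.

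The main obstacle is precisely this last bijection. The two sides carry genuinely different content multisets — $\beta$ has the internal entries saturated at $n-1$, whereas $w$ does not — so no Bender--Knuth or dominance symmetry applies directly, and one cannot merely permute weights. The content must be moved by a carefully chosen, coverage-preserving promotion, and the real work is to verify that the number of ways to carry it out is controlled exactly by the strip conditions, i.e. that the column-coverage characterization makes the redistribution reversible. I expect the bulk of the proof to lie in making this promotion explicit and checking it is a bijection.
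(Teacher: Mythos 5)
Your first two steps are sound and coincide with the paper's own opening move: inside the box $(n-1)^r$ each factor splits as $\sigma_{(n-1)^{k-1},b}=\sigma_{n-1}^{k-1}\sigma_b$, and iterated Pieri identifies the coefficient of $\sigma_\mu$ with the Kostka number $K_{\mu,\beta}$, where $\beta$ saturates the within-block entries at $n-1$. The genuine gap is your key lemma: it is \emph{false} that an SSYT contains an $(i,i+1)$-strip as soon as every column contains an $i$ or an $i+1$. Because a strip requires all $i$'s to lie to the left of all $(i+1)$'s, its existence is equivalent to a \emph{threshold} condition: there must exist $t$ such that every column $\le t$ contains an $i$ and every column $>t$ contains an $i+1$ (given such a $t$, the junction can indeed be repaired, but coverage alone does not produce a $t$). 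Coverage permits a column containing $i+1$ but not $i$ to sit to the \emph{left} of a column containing $i$ but not $i+1$, which kills the strip. The paper's own example in \S\ref{yt_section} (shape $(10,9,4,2)$, $n=14$) already refutes your lemma: every one of its ten columns contains a $3$ or a $4$, yet it has no $(3,4)$-strip, since column $4$ contains only $4$ and column $10$ contains only $3$. Nor do the Proposition's constraints rescue the claim: with $r=3$, $n-1=5$, the tableau of shape $(5,3,2)$ with rows $(1,1,1,2,2)$, $(2,2,3)$, $(3,3)$ has all weights $<n-1$ and $|\mu|=(n-1)(r-1)$, every column contains a $2$ or a $3$, but there is no $(2,3)$-strip. (Your lemma does hold for $i=1$, since the $1$'s form a prefix of the first row; this is what makes it look plausible, but it fails for interior $i$.)

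This matters because the coverage lemma was the tool that was supposed to make your content-redistributing ``promotion'' well-defined and reversible, and that bijection---which you explicitly defer---is where the entire content of the paper's proof lies. The paper constructs it by cutting $\mu$ along the chains $\mu^{s_1}\subset\cdots\subset\mu^{s_\ell}$, observing that each skew piece $\mu^{s_j}/\mu^{s_{j-1}}$ has only ``tall'' and ``short'' columns, and re-filling so that the missing entries of the short columns are \emph{non-increasing from left to right}; that monotonicity is precisely the threshold structure that coverage fails to encode, and it is also what makes the inverse map work (via the disjointness of consecutive strips, which forces the column sizes). So as it stands your proposal both rests on a false lemma and omits the step that constitutes the proof; repairing it would mean replacing coverage by the threshold condition and then carrying out essentially the paper's re-filling argument.
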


Before giving the proof, we first give an illustrative example. Take $n=18$, $r=6$, and $(a_0,a_1,a_2,a_3,a_4,a_5,a_6)=(1,3,7,8,10,14,18)$. Take also $\ell=3$ and $(s_0,s_1,s_2,s_3)=(0,2,5,6)$. Then, the class in question is
\begin{equation*}
\sigma_{17,11}\cdot\sigma_{17,17,10}\cdot\sigma_{13}\in H^{2\cdot5\cdot17}(\Gr(6,23)),
\end{equation*}
which we re-write as 
\begin{equation*}
\sigma_{17}\cdot\sigma_{11}\cdot\sigma_{17}\cdot\sigma_{17}\cdot\sigma_{10}\cdot\sigma_{13}.
\end{equation*}

By the Pieri rule, the coefficient of $\sigma_\mu$ in the expansion of this class is equal to the number of SSYTs of shape $\mu$ and type $(17,11,17,17,10,13)$. One such, for $\mu=(17,17,17,16,13,5)$, is shown below:
\begin{equation*}
\begin{ytableau}
1 & 1 & 1 & 1 & 1 & 1 & 1 & 1 & 1 & 1 & 1 & 1 & 1 & 1 & 1 & 1 & 1 \\
2 & 2 & 2 & 2 & 2 & 2 & 2 & 2 & 2 & 2 & 2 & 3 & 3 & 3 & 3 & 3 & 3 \\
3 & 3 & 3 & 3 & 3 & 3 & 3 & 3 & 3 & 3 & 3 & 4 & 4 & 4 & 4 & 4 & 4 \\
4 & 4 & 4 & 4 & 4 & 4 & 4 & 4 & 4 & 4 & 4 & 5 & 5 & 5 & 5 & 6\\
5 & 5 & 5 & 5 & 5 & 5 & 6 & 6 & 6 & 6 & 6 & 6 & 6\\
6 & 6 & 6 & 6 & 6
\end{ytableau}
\end{equation*}
Let $\mu^i$ denote the partition obtained by restricting to the boxes filled with entries that are most $i$. Note that, if $w_i(\mu)=n-1=17$, then the partition $\mu^{i-1}$ determines the 17 boxes filled with the entry $i$. Thus, the sequence of partitions $\mu^2\subset\mu^5\subset\mu^6=\mu$ completely determines the filling of $\mu$ with type $(17,11,17,17,10,13)$.

We now describe a re-filling of the above shape in such a way that $w_i(\mu)=n-1-(a_{i}-a_{i-1})$ for $i=1,\ldots,6$, that is, $\mu$ instead has type $(15,13,16,15,13,13)$, in such a way that preserves the partitions $\mu^2\subset\mu^5\subset\mu^6=\mu$.

First, note that the shape $\mu^2=(17,11)$ consists of $n-1-(a_{s_1}-a_{s_0})=11$ columns with $s_1-s_0=2$ boxes and the remaining six columns have one box. The 11 columns with two boxes must be filled downward with the entries $1,2$. Of the remaining six columns, four must contain the entry 1 (or equivalently, be missing the entry 2) and two must contain the entry 2 (be missing the entry 1). We fill these six columns from left to right, first the four missing the entry 2, then the remaining two missing the entry 1. The resulting filling of $\mu^2$ is
\begin{equation*}
\begin{ytableau}
1 & 1 & 1 & 1 & 1 & 1 & 1 & 1 & 1 & 1 & 1 & 1 & 1 & 1 & 1 & 2 & 2 \\
2 & 2 & 2 & 2 & 2 & 2 & 2 & 2 & 2 & 2 & 2 
\end{ytableau}
\end{equation*}

We next fill the skew shape $\mu^5/\mu^2=(17,17,17,15,6)/(17,11)$. It consists of $n-1-(a_{s_2}-a_{s_1})=10$ columns with $s_2-s_0=3$ boxes and seven remaining columns have two boxes each. Those with three boxes must be filled downward with the entries $3,4,5$. Of the remaining seven columns, four must be missing the entry 5, two must be missing the entry 4, and one must be missing the entry 3. We place these columns this order from left to right, and fill each in the unique possible way, that is, with entries increasing down columns. The resulting filling of $\mu^5$ (when combined with that of $\mu^2$ is:
\begin{equation*}
\begin{ytableau}
1 & 1 & 1 & 1 & 1 & 1 & 1 & 1 & 1 & 1 & 1 & 1 & 1 & 1 & 1 & 2 & 2 \\
2 & 2 & 2 & 2 & 2 & 2 & 2 & 2 & 2 & 2 & 2 & 3 & 3 & 3 & 3 & \textbf{3} & \textbf{4} \\
3 & 3 & 3 & 3 & 3 & 3 &  \textbf{3} &  \textbf{3} & \textbf{3} &  \textbf{3} &  \textbf{3} & 4 & 4 & 4 & 4 & \textbf{5} & \textbf{5} \\
4 & 4 & 4 & 4 & 4 & 4 & \textbf{4}  & \textbf{4}  & \textbf{4}  & \textbf{4}  & \textbf{5}  & 5 & 5 & 5 & 5\\
5 & 5 & 5 & 5 & 5 & 5
\end{ytableau}
\end{equation*}
The entries of the columns of $\mu^5/\mu^2$ containing only two entries are shown in bold. 

Finally, the skew shape $\mu^6/\mu^5$ is filled in the only way possible, with the entry 6 appearing in all 13 boxes:
\begin{equation*}
\begin{ytableau}
1 & 1 & 1 & 1 & 1 & 1 & 1 & 1 & 1 & 1 & 1 & 1 & 1 & 1 & 1 & 2 & 2 \\
2 & 2 & 2 & 2 & 2 & 2 & 2 & 2 & 2 & 2 & 2 & 3 & 3 & 3 & 3 & \textbf{3} & \textbf{4} \\
3 & 3 & 3 & 3 & 3 & 3 &  \textbf{3} &  \textbf{3} & \textbf{3} &  \textbf{3} &  \textbf{3} & 4 & 4 & 4 & 4 & \textbf{5} & \textbf{5} \\
4 & 4 & 4 & 4 & 4 & 4 & \textbf{4}  & \textbf{4}  & \textbf{4}  & \textbf{4}  & \textbf{5}  & 5 & 5 & 5 & 5\\
5 & 5 & 5 & 5 & 5 & 5 & 6 & 6 & 6 & 6 & 6 & 6 & 6\\
6 & 6 & 6 & 6 & 6
\end{ytableau}
\end{equation*}

We observe in addition that this new SSYT $\mu$ has an $(i,i+1)$-strip for $i=1,3,4$. The $(1,2)$-strip is clearly visible in the top row; we describe how to find $(3,4)$- and $(4,5)$-strips, necessarily inside $\mu^5/\mu^2$. Note that there are two contiguous blocks of columns of $\mu^5/\mu^2$ of size 3. Consider first the leftmost six columns, which are bordered on the right by a column of $\mu^5/\mu^2$ missing the entry 5. Then, we make bold the entries other than 5 in the leftmost six columns. Similarly, in columns 12 through 15, which are bordered on both sides by columns of $\mu^5/\mu^2$ missing the entry 4, we make bold the entries other than 4. We obtain now
\begin{equation*}
\begin{ytableau}
1 & 1 & 1 & 1 & 1 & 1 & 1 & 1 & 1 & 1 & 1 & 1 & 1 & 1 & 1 & 2 & 2 \\
2 & 2 & 2 & 2 & 2 & 2 & 2 & 2 & 2 & 2 & 2 & \textbf{3} &  \textbf{3} & \textbf{3} &  \textbf{3} & \textbf{3} & \textbf{4} \\
\textbf{3} &  \textbf{3} & \textbf{3} &  \textbf{3} &  \textbf{3} & \textbf{3} &  \textbf{3} &  \textbf{3} & \textbf{3} &  \textbf{3} &  \textbf{3} & 4 & 4 & 4 & 4 & \textbf{5} & \textbf{5} \\
 \textbf{4}  & \textbf{4}  & \textbf{4}  & \textbf{4}&  \textbf{4}  & \textbf{4} & \textbf{4}  & \textbf{4}  & \textbf{4}  & \textbf{4}  & \textbf{5}  &  \textbf{5}  &  \textbf{5}  &  \textbf{5}  &  \textbf{5} \\
5 & 5 & 5 & 5 & 5 & 5 & 6 & 6 & 6 & 6 & 6 & 6 & 6\\
6 & 6 & 6 & 6 & 6
\end{ytableau}
\end{equation*}
Now, each column of $\mu^5/\mu^2$ has exactly two entries in bold. The upper bold entries in each column form a $(3,4)$-strip, and the lower bold entries in each column form a $(4,5)$-strip.

The proof below describes an analogous algorithm in general, producing for each term of the product 
\begin{equation*}
 \prod_{j=1}^{\ell}\sigma_{(n-1)^{s_j-s_{j-1}-1},n-1-(a_{s_j}-a_{s_{j-1}})} 
 \end{equation*}
a SSYT of shape $\mu$ with the desired type and strips. We will show in addition that this algorithm gives a bijection between the terms in the product at hand and the needed SSYTs.

\begin{proof}[Proof of Proposition \ref{term_stripless}]
The Pieri rule implies that the terms $\sigma_\mu$ in the expansion of
\begin{equation*}
 \prod_{j=1}^{\ell}\sigma_{(n-1)^{s_j-s_{j-1}-1},n-1-(a_{s_j}-a_{s_{j-1}})} 
 \end{equation*}
 are in bijection with SSYTs of shape $\mu$ with $w_{s_j}(\mu)=n-1-(a_{s_j}-a_{s_{j-1}})$ for $j=1,\ldots,\ell$ and $w_{i}(\mu)=n-1$ for all other values of $i\in\{1,\ldots,r\}$.

Such an SSYT $\mu$ is determined by the data of the sequence of subpartitions
\begin{equation*}
\mu^{s_1}\subset\mu^{s_2}\cdots\subset\mu^{s_\ell}=\mu,
 \end{equation*}
where $\mu^i$ is, as before, the collection of boxes of $\mu$ with entry at most $i$. Furthermore, the skew shape $\mu^{s_j}/\mu^{s_{j-1}}$ has the property that $a_{s_j}-a_{s_{j-1}}$ of its columns have size $s_j-s_{j-1}-1$, while the remaining $n-1-(a_{s_j}-a_{s_{j-1}})$ have size $s_j-s_{j-1}$. We will refer to the former columns as \emph{short} and the latter as \emph{tall}. 

We now describe a filling of the same shape $\mu$, by re-filling each skew-shape $\mu^{s_j}/\mu^{s_{j-1}}$ with the entries $s_{j-1}+1,\ldots,s_j$. The tall columns are filled in the only semi-standard way possible, with the entries $s_{j-1}+1,\ldots,s_j$ from top to bottom. The short columns are filled such that the unique missing entries of each column are non-increasing from left to right, and the entries increase going down each column. The short columns may furthermore be filled in a unique way such that the weight of $i$ is equal to $n-1-(a_{i}-a_{i-1})$ for all $i\in[s_{j-1}+1,s_j]$, because the total number of squares of $\mu$ is
\begin{equation*}
\sum_{i=s_{j-1}+1}^{s_j}[n-1-(a_{i}-a_{i-1})]=(n-1)(s_j-s_{j-1}-1)+[(n-1)-(a_{s_j}-a_{s_{j-1}})].
\end{equation*}

We claim that the new filling is a SSYT; it suffices to restrict our attention to the individual skew-shapes $\mu^{s_j}/\mu^{s_{j-1}}$. The entries are increasing down columns by construction. Between consecutive columns of the same height (tall or short), the entries are weakly increasing across rows if the two columns are flush with each other, and this remains true if the column on the right is shifted upward. If a tall column lies immediately to the left of a short column, the entries are weakly increasing across rows because the highest box of the short column cannot lie below that of the tall column. Similarly, if a short column lies immediately to the left of a tall column, we get the same conclusion because the lowest box of the short column cannot lie above that of the short column.

We next show that the new filling contains the needed 1-strips, by making some of the entries of $\mu^{s_j}/\mu^{s_{j-1}}$ bold. First, all entries in each short column are made bold. Then, for each contiguous block of tall columns (not necessarily all flush with each other), suppose that the short columns to the immediate left and right are missing the entries $p\ge q$, respectively. We take $p=s_j$ if our block of tall columns includes the leftmost column of $\mu$, and $q=s_{j-1}$ if it includes the rightmost column of $\mu$. Now, pick any integer $s\in[q,p]$, and make all entries of the block of the tall columns bold except $s$. In this way, each column of $\mu^{s_j}/\mu^{s_{j-1}}$ contains exactly $s_j-s_{j-1}-1$ bold entries.

We claim that, for $t=1,2,\ldots,s_j-s_{j-1}-1$, taking the $t$-th bold entry in each column of $\mu^{s_j}/\mu^{s_{j-1}}$ gives a $(s_{j-1}+t,s_{j-1}+t+1)$-strip. By construction, these entries must each be equal to one of $s_{j-1}+t,s_{j-1}+t+1$. More precisely, if the missing (in a short column) or unique non-bold (in a tall column) entry is strictly greater than $s_{j-1}+t$, then the $t$-th bold entry is equal to $s_{j-1}+t$, and the $t$-th bold entry is otherwise equal to $s_{j-1}+t+1$. Because the missing and non-bold entries do not increase from left to right, this sequence of bold entries is given by some number of instances of $s_{j-1}+t$, followed by the remaining number of instances of $s_{j-1}+t+1$. (Note that both numbers must be non-zero, because $w_i(\mu)=n-1-(a_i-a_{i-1})<n-1$ for all $i$.) Furthermore, we find that these bold entries do not to move downward upon reading from left to right, by analyzing the possible relative positions of short and tall columns as in the proof that our new filling is a SSYT.

We have therefore produced, from each term $\sigma_\mu$ appearing in
\begin{equation*}
 \prod_{j=1}^{\ell}\sigma_{(n-1)^{s_j-s_{j-1}-1},n-1-(a_{s_j}-a_{s_{j-1}})},
 \end{equation*}
 a SSYT $\mu$ satisfying the needed type and 1-strip properties. 
 
We may also reverse the algorithm. Given a SSYT $\mu$ as in the statement of the proposition, we claim that the sequence of partitions
 \begin{equation*}
\mu^{s_1}\subset\mu^{s_2}\cdots\subset\mu^{s_\ell}=\mu,
 \end{equation*}
has the property that every column of $\mu^{s_j}/\mu^{s_{j-1}}$ has either $s_j-s_{j-1}$ or $s_{j}-s_{j-1}-1$ columns. Because only $s_j-s_{j-1}$ distinct entries are allowed in the filling of $\mu^{s_j}/\mu^{s_{j-1}}$, each column can have no more than this number of boxes. On the other hand, for $t=2,\ldots,s_j-s_{j-1}-1$, a $(s_{j-1}+t-1,s_{j-1}+t)$-strip and a $(s_{j-1}+t,s_{j-1}+t+1)$-strip must be disjoint. If this were not the case, then given any box $b$ in their intersection, necessarily filled with the entry $s_{j-1}+t$, the entry $s_{j-1}+t$ would appear in every column to the right of $b$ (as part of the $(s_{j-1}+t-1,s_{j-1}+t)$-strip) and every column to the left of $b$ (as part of the $(s_{j-1}+t,s_{j-1}+t+1)$-strip). However, by assumption, we cannot have $w_{s_{j-1}+t}(\mu)=n-1$, a contradiction. Thus, each column of $\mu^{s_j}/\mu^{s_{j-1}}$ must contain at least $s_j-s_{j-1}-1$ boxes, coming from the same number of pairwise disjoint strips.

We conclude that the skew-shape $\mu^{s_j}/\mu^{s_{j-1}}$ may be re-filled in a unique semi-standard way with $n-1$ instances each of the entries $s_{j-1}+1,\ldots,s_j-1$, and $n-1-(a_{s_j}-a_{s_{j-1}})$ instances of the entry $s_j$. Combining these fillings yields a SSYT of shape $\mu$ corresponding to a term of 
\begin{equation*}
 \prod_{j=1}^{\ell}\sigma_{(n-1)^{s_j-s_{j-1}-1},n-1-(a_{s_j}-a_{s_{j-1}})}.
 \end{equation*}
 The associations we have constructed are easily seen to be inverse bijections, completing the proof.
\end{proof}

\begin{cor}\label{pie_refined}
In the expansion of $M(a_1,\ldots,a_{r-1};n)$ in terms of the basis of Schubert cycles, the coefficient of $\sigma_\mu$ (with $\mu\subset(n-1)^{r}$ and $|\mu|=(n-1)(r-1)$ as before) is equal to the number of 1-strip-less SSYTs of shape $\mu$ with $i$-weight $n-1-(a_{i}-a_{i-1})$.
\end{cor}

\begin{proof}
As we have already seen, SSYTs with the prescribed number of entries are automatically 0-strip-less, because $n-1-(a_{i}-a_{i-1})<n-1$. Now, the claim is immediate from Corollary \ref{explicit_M}, Proposition \ref{term_stripless}, and Inclusion-Exclusion.
\end{proof}

\begin{rem}\label{degree_shift}
Recall that $M(a_1,\ldots,a_{r-1};n)$ is defined as the pushforward of $\sigma_\lambda\sigma_{\widetilde{\lambda}}$ from $\Gr(r,n)$ to $\Gr(r,n+r-1)$. Thus, all Schubert classes $\sigma_\mu$ appearing in the expansion of $M(a_1,\ldots,a_{r-1};n)$ have $\mu_r\ge r-1$, whereas the same is not true of the individual terms studied in Proposition \ref{term_stripless}. In particular, any SSYT of shape $\mu$ surviving in the sum of the right hand side of Corollary \ref{explicit_M} contains the entries $1,2,\ldots,r$ exactly once in each of the first $r-1$ columns. Thus, the Schubert coefficients of $M(a_1,\ldots,a_{r-1};n)$ may be regarded just as well as the number of 1-strip-less SSYTs of shape $\mu-(r-1)^r$ with $i$-weight $n-r-(a_{i}-a_{i-1})$.
\end{rem}

\begin{cor}\label{pie_all}
In the expansion of 
\begin{equation*}
\sum_{\lambda\subset(n-r-1)^{r-1}}\sigma_{\lambda}\sigma_{\widetilde{\lambda}}\in H^{2(r-1)(n-r-1)}(\Gr(r,n))
\end{equation*}
in terms of the basis of Schubert cycles, the coefficient of $\sigma_\mu$ (now with $\mu\subset(n-r)^{r}$ and $|\mu|=(n-r-1)(r-1)$) is equal to the number of 1-strip-less SSYTs of shape $\mu$.
\end{cor}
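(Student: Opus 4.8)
The plan is to fix the target shape $\mu\subset(n-r)^r$ with $|\mu|=(n-r-1)(r-1)$ and reorganize the sum over $\lambda$ according to the \emph{type} of the SSYTs being counted. By the definition of $M(a_1,\ldots,a_{r-1};n)$ as the pushforward of $\sigma_\lambda\sigma_{\widetilde\lambda}$ to $\Gr(r,n+r-1)$, together with Corollary \ref{pie_refined} and the degree shift of Remark \ref{degree_shift}, the coefficient of $\sigma_\mu$ in a single product $\sigma_\lambda\sigma_{\widetilde\lambda}$ equals the number of 1-strip-less SSYTs of shape $\mu$ whose $i$-weight is exactly $n-r-(a_i-a_{i-1})$, where $a_i=n-r-\lambda_i+i$ (with $a_0=1$, $a_r=n$). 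Summing over $\lambda$, it then suffices to show that, as $\lambda$ ranges over $\lambda\subset(n-r-1)^{r-1}$, the resulting weight vectors range without repetition over exactly the set of types that can be realized by 1-strip-less SSYTs of shape $\mu$.

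First I would set up the combinatorial bijection underlying this reorganization. A direct computation gives $a_i-a_{i-1}=\lambda_{i-1}-\lambda_i+1$ (taking $\lambda_0=n-r-1$ and $\lambda_r=0$), so the $i$-weight attached to $\lambda$ is $w_i=(n-r-1)+\lambda_i-\lambda_{i-1}$. Since $\lambda$ is a partition with $n-r-1=\lambda_0\ge\lambda_1\ge\cdots\ge\lambda_{r-1}\ge\lambda_r=0$, each $w_i$ lies in $\{0,1,\ldots,n-r-1\}$ and $\sum_i w_i=(r-1)(n-r-1)=|\mu|$. Conversely, from any integer vector $(w_1,\ldots,w_r)$ with $0\le w_i\le n-r-1$ and $\sum_i w_i=(r-1)(n-r-1)$ one recovers $\lambda$ by $\lambda_i=(n-r-1)-\sum_{k\le i}[(n-r-1)-w_k]$; here the constraint on $\sum_i w_i$ is exactly what forces $\lambda_r=0$, so this is an inverse (and, incidentally, the condition $1<a_1<\cdots<a_{r-1}<n$ required in Corollary \ref{pie_refined} corresponds precisely to $\lambda\subset(n-r-1)^{r-1}$). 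Thus $\lambda\mapsto(w_1,\ldots,w_r)$ is a bijection onto the set of all such ``allowed'' weight vectors.

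The remaining, and main, point is to verify that these allowed types are precisely the ones that can occur for a 1-strip-less SSYT of shape $\mu$, so that summing over $\lambda$ loses nothing. A 1-strip-less SSYT is in particular 0-strip-less, and I would observe that in any SSYT the cells carrying a fixed entry $i$ already satisfy both defining conditions of a strip shape: they lie in distinct columns (strict increase down columns), and the staircase condition holds, since if some such cell $b_1$ were strictly above \emph{and} strictly left of another such cell $b_2$, then the cell in $b_1$'s column and $b_2$'s row would be forced to exceed $i$ (moving down $b_1$'s column) yet be at most $i$ (moving left along $b_2$'s row), a contradiction. Hence any $n-r$ of these cells form an $(i)$-strip, so 0-strip-lessness forces $w_i(\mu)\le n-r-1$ for every $i$; combined with $w_i\ge0$ and $\sum_i w_i=|\mu|$, every occurring type is allowed. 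Consequently the refined counts of Corollary \ref{pie_refined}, summed over all $\lambda$, reassemble without omission or overlap into the count of all 1-strip-less SSYTs of shape $\mu$, which is the claim. I expect the only delicate bookkeeping to be keeping the two shifts straight—the passage $\Gr(r,n)\leftrightarrow\Gr(r,n+r-1)$ and the removal of the $(r-1)^r$ columns—but these are already handled by Remark \ref{degree_shift}.
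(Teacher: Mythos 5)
Your proposal is correct and follows essentially the same route as the paper: reduce to Corollary \ref{pie_refined} together with Remark \ref{degree_shift}, identify the sum over $\lambda\subset(n-r-1)^{r-1}$ with the sum over all admissible weight vectors, and observe that the only excluded types ($w_i=n-r$) are precisely those forcing an $(i)$-strip, hence already absent from 1-strip-less SSYTs. Your explicit bijection $\lambda\mapsto(w_1,\ldots,w_r)$ and the staircase argument showing that $n-r$ equal entries form a strip are details the paper leaves implicit, but the underlying argument is the same.
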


Let us again give an illustrative example of the proof. We take $r=3$ and $n=7$, so we sum over partitions $\lambda\subset 3^2$. In the below table, the partitions $\lambda,\widetilde{\lambda}$ are shown, along with the corresponding pairs $(a_1,a_2)$ for which $1=a_0<a_1<a_2<a_3=7$, and the required type $(w_1,w_2,w_3)=(5-a_1,4-(a_2-a_1),a_2-3)$, as prescribed by Corollary \ref{pie_refined} and Remark \ref{degree_shift}, of a 1-strip-less SSYT counted in the expansion of $\sigma_{\lambda}\sigma_{\widetilde{\lambda}}$. 
%(after passing to $M(a_1,\ldots,a_{r-1};n)$, taking only the terms with $\mu_r\ge r-1$, and removing the first $r-1=2$ columns of each $\mu$).
\begin{center}
\begin{tabular}{ c | c | c | c }
$\lambda$ & $\widetilde{\lambda}$ & $(a_1,a_2)$ & $(w_1,w_2,w_3)$ \\
\hline
$\emptyset$ & $(3,3)$ & $(5,6)$ & $(0,3,3)$ \\
$(1)$ & $(3,2)$ & $(4,6)$ & $(1,2,3)$\\
$(2)$ & $(3,1)$ & $(3,6)$ & $(2,1,3)$\\
$(3)$ & $(3)$ & $(2,6)$ & $(3,0,3)$\\
$(1,1)$ & $(2,2)$ & $(4,5)$ & $(1,3,2)$\\ 
$(2,1)$ & $(2,1)$ & $(3,5)$ & $(2,2,2)$\\
$(3,1)$ & $(2)$ & $(2,5)$ & $(3,1,2)$\\
$(2,2)$ & $(1,1)$ & $(3,4)$ & $(2,3,1)$\\
$(3,2)$ & $(1)$ & $(2,4)$ & $(3,2,1)$\\
$(3,3)$ & $\emptyset$ & $(2,3)$ & $(3,3,0)$
\end{tabular}
\end{center}

In the right-most column, we see all possible types of a 1-strip-less SSYT of size 6 inside the partition $(4)^3$, as no entry can appear $n-r=4$ times without forming a $0$-strip. Thus, it follows from Corollary \ref{pie_refined} that summing over all $\sigma_{\lambda}\sigma_{\widetilde{\lambda}}$ yields precisely a sum of Schubert cycles corresponding to 1-strip-less SSYTs.

\begin{proof}[Proof of Corollary \ref{pie_all}]
Summing over $\lambda\subset(n-r-1)^{r-1}$ amounts to summing over integers $a_i$ with $1<a_1<\cdots<a_{r-1}<n$, and the coefficients appearing in the class in question are the sums of the coefficients appearing in Corollary \ref{pie_refined}, where the partitions $\mu$ are according to Remark \ref{degree_shift}. Varying over all possible $a_i$ has the effect of varying over all distributions of the entries $1,2,\ldots,r$ in a SSYT of shape $\mu$, except that $n-r-(a_{i}-a_{i-1})$ may not be equal to $n-r$. However, the presence of $n-r$ (the largest possible) instances of a given entry $i$ amounts to the existence of a $(i)$-strip, and is therefore already excluded. The conclusion follows.
\end{proof}

\section{Comparison to Klyachko's formula}\label{klyachko_comparison_sec}

\begin{prop}\label{coeffs_fill0}
The coefficient of $\sigma_\mu$ in 
\begin{equation*}
\sum_{1<a_1<\cdots<a_{r-1}<n}M(a_1,\ldots,a_{r-1};n)\in H^{2(r-1)(n-1)}(\Gr(r,n+r-1))
\end{equation*}
is equal to 
\begin{equation*}
\sum_{j=0}^{m(\mu)}(-1)^j\binom{n-r+j-1}{j}|\ssyt^0_{r-j}(\mu^j)|,
\end{equation*}
where we recall that $|\ssyt^0_{r-j}(\mu')|$ is the number of 0-strip-less SSYTs of shape $\mu'$. We also adopt here the notation of Theorem \ref{bf_formula} for the integers $m(\mu)$ and partitions $\mu^j$, which are now defined in reference to parts of the maximal length $n-1$, rather than $n-r$.
\end{prop}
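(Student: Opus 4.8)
The plan is to compute the coefficient of $\sigma_\mu$ directly from the explicit expansion in Corollary \ref{explicit_M}, summed over $a$, exploiting that $\sigma_{n-1}$ is the class of a full row on $\Gr(r,n+r-1)$. The first step is the Pieri identity $\sigma_{(n-1)^{g-1},m}=\sigma_{n-1}^{g-1}\sigma_m$: since there are only $n-1$ columns, a horizontal strip of size $n-1$ occupies every column once, so multiplication by $\sigma_{n-1}$ simply prepends a full row. This is the alternative form of Corollary \ref{explicit_M} already recorded, so
\begin{equation*}
\sum_{1<a_1<\cdots<a_{r-1}<n}M(a_1,\ldots,a_{r-1};n)=\sum_a\sum_{0=s_0<\cdots<s_\ell=r}(-\sigma_{n-1})^{r-\ell}\prod_{k=1}^{\ell}\sigma_{n-1-(a_{s_k}-a_{s_{k-1}})}.
\end{equation*}

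Next I would perform the $a$-sum for each fixed subset $\{s_1,\ldots,s_{\ell-1}\}\subset\{1,\ldots,r-1\}$. Writing $g_k=s_k-s_{k-1}$ and $d_k=a_{s_k}-a_{s_{k-1}}$ (with $a_0=1$, $a_r=n$, hence $\sum_k d_k=n-1$), the summand depends on $a$ only through the $d_k$, and summing over the $g_k-1$ intermediate indices in the $k$-th block multiplies the term by $\binom{d_k-1}{g_k-1}$. To read off the coefficient of $\sigma_\mu$, I use that $\sigma_{n-1}^{r-\ell}$ prepends $r-\ell$ full rows: the coefficient of $\sigma_\mu$ equals the coefficient of $\sigma_{\mu^{r-\ell}}$ in $\prod_k\sigma_{n-1-d_k}$, which vanishes unless $m(\mu)\ge r-\ell$. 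Setting $j=r-\ell$ reorganizes everything as an alternating sum over $j=0,\ldots,m(\mu)$, and by the Pieri rule the inner coefficient is the number of SSYTs of shape $\mu^j$ with entries $1,\ldots,r-j$ and $k$-weight $w_k=n-1-d_k$.

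The crux is then to interchange the summation so that the tableau $\tau$ of shape $\mu^j$ is summed first. For a given $\tau$ each difference $d_k=n-1-w_k(\tau)$ is forced, so the tuple $a$ is recovered uniquely; the requirement $d_k\ge1$ that $a$ be strictly increasing becomes $w_k\le n-2$ for all $k$, which (since an $(k)$-strip must occupy all $n-1$ columns, so $w_k=n-1$ exactly when $\tau$ has a $(k)$-strip) is precisely the condition that $\tau$ be $0$-strip-less. With $\tau$ fixed, the residual sum runs over compositions $(g_1,\ldots,g_{r-j})$ of $r$ into positive parts, and the Vandermonde convolution gives
\begin{equation*}
\sum_{\substack{g_k\ge1\\\sum g_k=r}}\prod_{k=1}^{r-j}\binom{n-2-w_k}{g_k-1}=\binom{\sum_{k}(n-2-w_k)}{j}=\binom{n-r+j-1}{j},
\end{equation*}
where the exponent is evaluated using $\sum_k w_k=|\mu^j|=(n-1)(r-1-j)$. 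Summing this constant over all $0$-strip-less $\tau$ of shape $\mu^j$ produces exactly $\binom{n-r+j-1}{j}\,|\ssyt^0_{r-j}(\mu^j)|$, the $j$-th term of the claimed formula.

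I expect the main obstacle to lie in the combinatorial bookkeeping of this interchange: verifying that summing over subsets $\{s_1,\ldots,s_{\ell-1}\}$ of fixed size is the same as summing over compositions $(g_k)$ of $r$, that the multiplicities $\prod_k\binom{d_k-1}{g_k-1}$ correctly count the interlacing choices of the intermediate $a_i$, and above all that the strict-increase constraint $d_k\ge1$ lines up exactly with $0$-strip-lessness via $w_k=n-1\Leftrightarrow(k)\text{-strip}$. Once these identifications are secured, the Vandermonde identity together with the weight computation $\sum_k(n-2-w_k)=n-r+j-1$ closes the argument.
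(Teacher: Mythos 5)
Your proposal is correct and takes essentially the same route as the paper: both arguments fix $j=r-\ell$, use the alternative form of Corollary \ref{explicit_M} together with the Pieri rule to identify the coefficient of $\sigma_\mu$ with counts of SSYTs of shape $\mu^j$ with prescribed weights $w_k=n-1-d_k$, and translate the strict-increase constraint $d_k\ge 1$ into $0$-strip-lessness. The only cosmetic difference is in the last counting step: where you sum $\prod_k\binom{d_k-1}{g_k-1}$ over compositions $(g_k)$ of $r$ and apply Vandermonde to get $\binom{n-r+j-1}{j}$, the paper obtains the same binomial in one stroke by choosing the $r-\ell$ ``free'' values $a_i$ among the $n-\ell-1$ integers of $\{1,\ldots,n\}$ not already occupied by the $a_{s_k}$ — the identical count, organized as a single choice rather than block by block.
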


\begin{proof}
For a given $\mu$, fix $\ell\le m(\mu)$. It suffices to identify the number $\binom{n-\ell-1}{r-\ell}|\ssyt^0_\ell(\mu^{r-\ell})|$ as the coefficient of $\sigma_\mu$ in the sum 
\begin{equation*}
\sum_{0=s_0<s_1<\cdots< s_\ell =r}\prod_{j=1}^{\ell}\sigma_{(n-1)^{s_j-s_{j-1}-1},n-1-(a_{s_j}-a_{s_{j-1}})}
\end{equation*}
appearing in Corollary \ref{explicit_M}, where the value of $\ell$ is fixed (and equal to the given $\ell$).

Consider the set of fillings counted by $|\ssyt^0_\ell(\mu^{r-\ell})|$, that is, 0-strip-less SSYTs of shape $\mu^{r-\ell}$ with entries $1,2,\ldots,\ell$. For a given such filling, let $\alpha_j$ denote the number of instances of the entry $j$; the 0-strip-lessness amounts to the fact that $\alpha_j\le n-2$. On the other hand, the coefficient of $\sigma_\mu$ in the product
\begin{equation*}
\prod_{j=1}^{\ell}\sigma_{(n-1)^{s_j-s_{j-1}-1},n-1-(a_{s_j}-a_{s_{j-1}})}=(\sigma_{n-1})^{r-\ell}\prod_{j=1}^{\ell}\sigma_{n-1-(a_{s_j}-a_{s_{j-1}})}
\end{equation*}
counts, when written in terms of the Schubert basis, (necessarily 0-strip-less) SSYTs of shape $\mu^{r-\ell}$ with $\alpha_j=n-1-(a_{s_j}-a_{s_{j-1}})$. 

Therefore, given a 0-strip-less SSYT of shape $\mu^{r-\ell}$, write
\begin{equation*}
a_{s_j}=jn-(\alpha_1+\cdots+\alpha_j)-(j-1).
\end{equation*}
for $j=0,1,2,\ldots,\ell$, so that $a_{s_0}=a_1=1$ and $a_{s_\ell}=a_r=n$. Because $\alpha_j\le n-2$, the $a_{s_j}$ are strictly increasing, and in particular, pairwise distinct. Of the remaining $n-\ell-1$ integers among $1,\ldots,n$, a choice of $r-\ell$ of them corresponds to a choice of a set of integers $\{a_0,a_1,\ldots,a_{r-1},a_r\}\subset\{1,2,\ldots,n\}$ containing the values $a_{s_j}$ already fixed, and furthermore, this choice determines the indices $s_j$, as we require $1<a_1<\cdots<a_{r-1}<n$. With the values of $s_j$ now determined, the SSYT we started with now corresponds naturally to a term of $(\sigma_{n-1})^{r-\ell}\prod_{j=1}^{\ell}\sigma_{n-1-(a_{s_j}-a_{s_{j-1}})}$.

We have therefore described a map from a set of cardinality $\binom{n-\ell-1}{r-\ell}|\ssyt^0_\ell(\mu^{r-\ell})|$ to the set of terms $\sigma_\mu$ appearing in the sum
\begin{equation*}
\sum_{0=s_0<s_1<\cdots< s_\ell =r}\prod_{j=1}^{\ell}\sigma_{(n-1)^{s_j-s_{j-1}-1},n-1-(a_{s_j}-a_{s_{j-1}})}
\end{equation*}
after expanding; it is routine to check that this map is a bijection. This completes the proof.
\end{proof}

We are now ready to complete the proofs of the main theorems.

\begin{proof}[Proof of Theorem \ref{main_formula}]
Combining Proposition \ref{coeffs_fill0} with Lemma \ref{count_0strip}, the coefficient of $\sigma_\mu$ in 
\begin{equation*}
\sum_{1<a_1<\cdots<a_{r-1}<n}M(a_1,\ldots,a_{r-1};n)\in H^{2(r-1)(n-1)}(\Gr(r,n+r-1))
\end{equation*}
is 
\begin{align*}
&\sum_{j=0}^{m(\mu)}(-1)^j\binom{n-r+j-1}{j}\left(\sum_{k=0}^{m(\mu)-j}(-1)^k\binom{r-j}{k}|\ssyt_{r-j-k}(\mu^{j+k})|\right)\\
=&\sum_{\ell=0}^{m(\mu)}(-1)^{\ell}\binom{n}{\ell}|\ssyt_{r-\ell}(\mu^\ell)|,
\end{align*}
where we have used the identity
\begin{equation*}
\sum_{j=0}^{\ell}\binom{n-r+j-1}{j}\binom{r-j}{\ell-j}=\binom{n}{\ell}.
\end{equation*}

The Theorem now follows from the fact that this coefficient is also equal to the coefficient of $\sigma_{\mu-(r-1)^r}$ in $\sum_{\lambda\subset(n-r-1)^{r-1}}\sigma_{\lambda}\sigma_{\widetilde{\lambda}}$.
\end{proof}

\begin{proof}[Proof of Theorem \ref{stripless_interpretation}]
We have seen in the previous proof that $\Gamma^\mu_{r,n}$, as defined in Theorem \ref{k_formula}, is equal to the coefficient of $\sigma_\mu$ in the sum of the $M(a_1,\ldots,a_{r-1};n)$. By Corollary \ref{pie_all}, this coefficient is also equal to the number of 1-strip-less SSYTs of shape $\mu$.
\end{proof}

\appendix
\section{1-strip-less tableau and SYT with $r-1$ descents}

Recall that a \emph{standard Young tableau (SYT)} of shape $\lambda$ is a filling of the boxes of $\lambda$ with the entries $1,2,\ldots,|\lambda|$, each entry appearing exactly once, so that entries increase strictly across rows and down columns. The entry $i$ is a \emph{descent} of the filling if the entry $i+1$ appears in a strictly lower row. Below, the SYT of shape $\lambda=(10,9,4,2)$ has 3 descents: 4, 13, and 22.
\begin{equation*}
\begin{ytableau}
1 & 2 & 3 & 4 & 8 & 9 & 10 & 11 & 12 & 13 \\
5 & 6 & 7 & 17 & 18 & 19 & 20 & 21 & 22\\
14 & 15 & 16 & 25\\
23 & 24
\end{ytableau}
\end{equation*}

Klyachko showed in \cite{k2} that the coefficients $\Gamma^{\mu}_{r,n}$ count SYTs of shape $\overline{\mu}$ with exactly $r-1$ descents. Having shown that $\Gamma^{\mu}_{r,n}$ also count 1-strip-less SSYTs of shape $\mu$, we have:

\begin{prop}\label{bijection}
Fix $r,n$ as in the main body of the paper. Then, the number of SYTs of shape $\overline{\mu}$ with exactly $r-1$ descents is equal to the number of 1-strip-less SSYTs of shape $\mu$.
\end{prop}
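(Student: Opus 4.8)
The fastest proof is non-bijective and uses only results already available. By Theorem \ref{stripless_interpretation} the number of 1-strip-less SSYTs of shape $\mu$ equals $\Gamma^\mu_{r,n}$, and by Klyachko \cite[Theorem 3.3]{k2} the number of SYTs of shape $\overline\mu$ with exactly $r-1$ descents also equals $\Gamma^\mu_{r,n}$; the two counts therefore agree. This already establishes the proposition, but the purpose here is to make the coincidence transparent by exhibiting an explicit bijection, so the plan is to build one directly.

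My first step is to recast the descent side as an SSYT count via standardization. An SYT $Q$ of shape $\overline\mu$ with exactly $r-1$ descents decomposes into exactly $r$ maximal increasing runs; relabelling the $k$-th run by the single value $k$ yields an SSYT $\widehat Q$ of shape $\overline\mu$ that uses each of $1,\dots,r$ and in which each value boundary is \emph{forced}, in the sense that standardizing $\widehat Q$ recovers $Q$ with a genuine descent at every run boundary. Conversely, such ``tight'' SSYTs standardize back to SYTs with exactly $r-1$ descents. This is the standard correspondence between SYTs and the fundamental-quasisymmetric refinement of $s_{\overline\mu}$ \cite[Ch.~7]{EC2}, and it reduces the right-hand side to counting tight SSYTs of shape $\overline\mu$ with entries $1,\dots,r$.

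Next I would match the two sides at the level of content. A 1-strip-less SSYT of shape $\mu$ has type $\alpha=(\alpha_1,\dots,\alpha_r)$ with $\alpha_i\le n-r-1$, while a filling of $\overline\mu$ using all $r$ values has type $\beta$ with $\beta_i\ge 1$; the assignment $\beta_i=(n-r)-\alpha_i$ is a bijection between these two families of types, since it exchanges the bounds $\alpha_i\le n-r-1$ and $\beta_i\ge 1$ and sends $\sum\alpha_i=|\mu|$ to $\sum\beta_i=r(n-r)-|\mu|=|\overline\mu|$. This is the combinatorial reason the $0$-strip-less condition on the $\mu$-side corresponds exactly to using all $r$ values on the $\overline\mu$-side, and it tells me what the bijection must do on types.

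The heart of the argument---and the step I expect to be the main obstacle---is to promote this content-level matching to an actual shape-level bijection that sends 1-strip-less SSYTs of shape $\mu$ to tight SSYTs of shape $\overline\mu$, compatibly with the type correspondence $\beta_i=(n-r)-\alpha_i$. The essential point to verify is that, under whatever shape-transfer one constructs (I would first try a jeu-de-taquin/complementation inside the rectangle $(n-r)^r$, building the map inductively on $r$ as in the main text), the \emph{strong} conditions correspond: the absence of an $(i,i{+}1)$-strip in the $\mu$-filling must translate precisely into the value boundary between the two corresponding values of the $\overline\mu$-filling being a genuine descent, i.e.\ into the impossibility of merging two consecutive runs. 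Since an $(i,i{+}1)$-strip is a strip of the maximal length $n-r$, this is a local statement about how such a full-width strip meets the complementary region, and proving it box-by-box---together with exhibiting the inverse map---is what I expect to require the most care.
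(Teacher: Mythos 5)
Your first paragraph is already a complete proof, and it coincides with the paper's own derivation: the appendix states the proposition as an immediate consequence of Theorem \ref{stripless_interpretation} together with Klyachko's theorem that $\Gamma^\mu_{r,n}$ counts SYTs of shape $\overline{\mu}$ with exactly $r-1$ descents. The bijection you outline afterwards is also the one the paper constructs (attributed to Nadeau): destandardize the SYT into what you call a tight SSYT $\overline{\mu}'$ of shape $\overline{\mu}$, then complement inside the rectangle $(n-r)^r$ so that every column of the rectangle contains each of $1,\ldots,r$ exactly once, which implements exactly your type correspondence $\beta_i=(n-r)-\alpha_i$. The verification you defer is precisely the paper's main work, and it is more local than your guess of jeu de taquin or induction on $r$ suggests: the paper shows that tightness of $\overline{\mu}'$ is equivalent to the condition that, for each $i$, the rightmost $i$ in $\overline{\mu}'$ does not lie strictly to the left of the leftmost $i+1$ (via the observation that a descent forces the box in the crossing row and column to carry the entry $i+1$), and that under column-by-column complementation this condition is equivalent to $\mu$ having no $(i,i{+}1)$-strip. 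So the proposition stands on your citation argument, and your bijective outline is faithful to the paper's construction but stops short of the verification that makes it a self-contained proof.
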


We describe a bijection between these two sets of objects, thereby giving a self-contained proof of Proposition \ref{bijection}. We thank Philippe Nadeau for explaining this bijection in private communication.

As in the main body of the paper, we fix integers $r<n$. Consider a SYT of shape $\overline{\mu}$ with descents $i_1<\cdots<i_{r-1}$. Then, we construct a new filling of the Young diagram of $\overline{\mu}$ in the following way: replace the entries $1,\ldots,i_1$ with the entry 1, the entries $i_1+1,\ldots,i_2$ with the entry 2, and so on, until the entries greater than $i_{r-1}$ are replaced with the entry $r$. We refer to this new filling of the same shape as $\overline{\mu}'$ to avoid confusion. If $\overline{\mu}$ is the filling of the partition $\lambda=(10,9,4,2)$ at the beginning of this section, then $\overline{\mu}'$ is the filling
\begin{equation*}
\begin{ytableau}
1 & 1 & 1 & 1 & 2 & 2 & 2 & 2 & 2 & 2 \\
2 & 2 & 2 & 3 & 3 & 3 & 3 & 3 & 3\\
3 & 3 & 3 & 4\\
4 & 4
\end{ytableau}
\end{equation*}

We claim that the new filling $\overline{\mu}'$ is an SSYT. Indeed, if the boxes of $\overline{\mu}'$ are filled in order according to our construction, then the fact that $\overline{\mu}$ is an SYT implies that, at each step, the box being filled is the leftmost in its row and highest in its column that has not yet been filled. Thus, the entries of $\overline{\mu}'$ increase weakly both across rows and down columns. Furthermore, in the sequence of boxes filled with a given entry $j$ in $\overline{\mu}'$, each subsequent box must lie strictly to the right of the previous one, because $\overline{\mu}$ has no descents between $i_{j-1}+1$ and $i_j$. Thus, the entries of $\overline{\mu}'$ in fact decrease strictly down columns.

Note further that $\overline{\mu}'$ contains each of the entries $1,2,\ldots,r$ at least once. Also, for $j=1,2,\ldots,r-1$, the rightmost appearance of the entry $j$ in $\overline{\mu}'$, say in box $b_j$ (corresponding to the last in the sequence of boxes filled with $j$), cannot appear strictly to the left of the leftmost appearance of the entry $j+1$, say in box $b_{j+1}$ (corresponding to the first in the sequence of boxes filled with $j+1$). Indeed, if this were the case, then box $b_{j+1}$ must also appear strictly below box $b_j$, corresponding to a descent in $\overline{\mu}$. On the other hand, the unique box $b$ in the same column as $b_j$ and the same row as $b_{j+1}$ could only be filled with the entry $j+1$, contradicting the assumption that the leftmost appearance of the entry $j+1$ appears in box $b_{j+1}$.

Next, take the unique filling of shape $\mu=\overline{\overline{\mu}'}$ with the property that, when $\mu,\overline{\mu}'$ are drawn complementary to each other inside a $r\times(n-r)$-rectangle $R$, with $\mu$ top- and left-justified, and $\overline{\mu}'$ (now rotated 180 degrees) is filled as above, each column contains each of the entries $1,2,\ldots,r$ exactly once, and the entries of $\mu$ decrease down rows. In the example above, this procedure gives the filling of the full $r\times(n-r)$ rectangle
\begin{equation*}
\begin{ytableau}
1 & 1 & 1 & 1 & 1 & 1 & 2 & 4 & 4 & 4 \\
3 & 4 & 4 & 4 & 4 & 4 & 4 & 3 & 3 & 3\\
4 & 3 & 3 & 3 & 3 & 3 & 3 & 2 & 2 & 2\\
2 & 2 & 2 & 2 & 2 & 2 & 1 & 1 & 1 & 1
\end{ytableau}
\end{equation*}
and the resulting filling $\mu$ is 
\begin{equation*}
\begin{ytableau}
1 & 1 & 1 & 1 & 1 & 1 & 2 & 4\\
3 & 4 & 4 & 4 & 4 & 4\\
4
\end{ytableau}
\end{equation*}

We claim that the filling $\mu$ is also a SSYT. It is strictly decreasing down rows by construction. To show that it is weakly increasing across rows, consider a pair of columns $c_1$ and $c_2$ of $R$, with $c_1$ lying to the left. If we fill these two columns of $R$, at the $s$-th step adding the entry $s$ to each column according to where it appears in either $\mu$ or $\overline{\mu}'$, then the fact that the filling of $\overline{\mu}'$ is semi-standard implies that, at every step, there are at least as many filled boxes of $c_2$ in $\overline{\mu}'$ as there are filled boxes of $c_1$ in $\overline{\mu}'$. Thus, there are at least as many filled boxes of $c_1$ in $\mu$ as there are filled boxes of $c_2$ in $\mu$. This, in turn, implies that, in each row, the entry of $\mu$ in $c_1$ is at most that in $c_2$, as needed.

We claim further that the filling of $\mu$ is 1-strip-less. Note first that $\mu$ cannot contain a 0-strip, because $\overline{\mu}'$ contains each entry $1,2,\ldots,r$ at least once. If $\mu$ had a $(j,j+1)$-strip, in which the entry $j$ appears in the first $m$ columns of $\mu$ and the entry $j+1$ appears in the last $n-r-m$ columns of $\mu$, then $\overline{\mu}'$ (in the standard orientation) would contain no instances of the entry $j$ in its last $m$ columns, and no instances of the entry $j+1$ in its first $n-r-m$ columns. This would mean that the rightmost instance of the entry $j$ appears strictly to the left of the leftmost instance of the entry $j+1$, which contradicts our observation above.

We have therefore produced, from a SYT of shape $\overline{\mu}$, a 1-strip-less SSYT of shape $\mu$. We may now reverse the procedure.

Given a 1-strip-less SSYT of shape $\mu$, we obtain a complementary SSYT $\overline{\mu}'$ by the same method as above, filling the rectangle $R$. That $\mu$ is 1-strip-less implies that $\overline{\mu}'$ contains each entry $1,2,\ldots,r$ at least once, and, for $j=1,2,\ldots,r-1$, that the rightmost instance of the entry $j$ in $\overline{\mu}'$ does not appear strictly to the left of the leftmost instance of the entry $j+1$ in $\overline{\mu}'$. This in turn implies, in $\overline{\mu}'$, that the rightmost instance of the entry $j$ appears strictly above the leftmost instance of the entry $j+1$.

We then rotate $\overline{\mu}'$ into its standard orientation. Ordering the boxes containing the entry 1 from left to right, then the boxes containing the entry 2 from left to right, and so on, we form a filling $\overline{\mu}$ of the same shape by filling the $j$-th box with the entry $j$. This filling $\overline{\mu}$ is a SYT. Indeed, the fact that the entries of $\overline{\mu}'$ strictly decrease down columns immediately implies the same of $\overline{\mu}$, and the fact that the entries of $\overline{\mu}'$ weakly increase across rows and that we order boxes containing the same entry from left to right implies that the entries of $\overline{\mu}$ increase across rows.

By construction, descents of $\overline{\mu}$ can only occur when passing from a box containing the entry $j$ in $\overline{\mu}'$ to one containing the entry $j+1$, because, when moving from left to right through the boxes of $\overline{\mu}'$ containing the same entry $j$, one can only move upward. On the other hand, the rightmost instance of the entry $j$ in $\overline{\mu}'$ appears strictly to the left of the leftmost instance of the entry $j+1$ in $\overline{\mu}'$, so passing between the corresponding two boxes of $\overline{\mu}$ yields a descent for each $j=1,2,\ldots,r-1$. Thus, the SYT $\overline{\mu}$ has exactly $r-1$ descents.

Finally, it is straightforward to see that the two associations we have described are inverse bijections between the objects in question.

\end{document}